\numberwithin{equation}{section}
\newtheorem{Theorem}{Theorem}[section]
\newtheorem*{Theorem*}{Theorem}
\newtheorem{Proposition}[Theorem]{Proposition}
 { \theoremstyle{definition}
\newtheorem{Definition}[Theorem]{Definition}

\newtheorem{Example}[Theorem]{Example}
\newtheorem{Remark}[Theorem]{Remark} }
\def\C{{\mathbb C}}
\def\N{{\mathbb N}}
\def\R{{\mathbb R}}
\def\sgn{\operatorname{sgn}}
\def\dd{ \mathrm{d}}
\def\Det{\operatorname{Det}}
\def\tr{\operatorname{tr} }
\def\CC{\mathcal{C}}
\def\mcalD{\mathcal{D}}
\def\mcalHn{\mathcal{H}^{n \times n}}
\def\mcalM{\mathcal{M}}
\def\frS{\mathfrak{S}}
\def\uu{\boldsymbol{u}}
\def\vv{\boldsymbol{v}}
\def\ww{\boldsymbol{w}}
\def\xx{\boldsymbol{x}}
\begin{document}

\newcommand{\arXivNumber}{2412.03000}

\renewcommand{\thefootnote}{}

\renewcommand{\PaperNumber}{055}

\FirstPageHeading

\ShortArticleName{Hyperdeterminantal Total Positivity}

\ArticleName{Hyperdeterminantal Total Positivity\footnote{This paper is a~contribution to the Special Issue on Basic Hypergeometric Series Associated with Root Systems and Applications in honor of Stephen C.~Milne's 75th birthday. The~full collection is available at \href{https://www.emis.de/journals/SIGMA/Milne.html}{https://www.emis.de/journals/SIGMA/Milne.html}}}

\Author{Kenneth W.~JOHNSON~$^{\rm a}$ and Donald St.~P.~RICHARDS~$^{\rm b}$}

\AuthorNameForHeading{K.W.~Johnson and D.St.P.~Richards}

\Address{$^{\rm a)}$~Department of Mathematics, Pennsylvania State University, \\
\hphantom{$^{\rm a)}$}~Abington, Pennsylvania 19001, USA}
\EmailD{\href{mailto:kwj1@psu.edu}{kwj1@psu.edu}}

\Address{$^{\rm b)}$~Department of Statistics, Pennsylvania State University, \\
\hphantom{$^{\rm b)}$}~University Park, PA 16802, USA}
\EmailD{\href{mailto:dsr11@psu.edu}{dsr11@psu.edu}}

\ArticleDates{Received December 04, 2024, in final form July 03, 2025; Published online July 11, 2025}

\Abstract{For a given positive integer $m$, the concept of {\it hyperdeterminantal total positivity} is defined for a kernel $K\colon {\mathbb R}^{2m} \to {\mathbb R}$, thereby generalizing the classical concept of total positivity. Extending the fundamental example, $K(x,y) = \exp(xy)$, $x, y \in \mathbb{R}$, of a classical totally positive kernel, the hyperdeterminantal total positivity property of the kernel $K(x_1,\dots,x_{2m}) = \exp(x_1\cdots x_{2m})$, $x_1,\dots,x_{2m} \in \mathbb{R}$ is established. By applying Matsumoto's hyperdeterminantal Binet--Cauchy formula, we derive a generalization of Karlin's basic composition formula; then we use the generalized composition formula to construct several examples of hyperdeterminantal totally positive kernels. Further generalizations of hyperdeterminantal total positivity by means of the theory of finite reflection groups are described and some open problems are posed.}

\Keywords{Binet--Cauchy formula; determinant; generalized hypergeometric functions of matrix argument; Haar measure; hyperdeterminant; Schur function; unitary group; zonal polynomials}

\Classification{33C20; 05E05; 15A15; 15A72; 33C80}

\begin{flushright}
\begin{minipage}{80mm}\it
This article is dedicated to Steve Milne, who has displayed over the years a~masterful use of the Schur functions in algebraic and analytic settings. We heartily congratulate Steve on the occasion of his 75th birthday and wish him many more years of beautiful applications of the Schur functions and their properties.
\end{minipage}
\end{flushright}

\renewcommand{\thefootnote}{\arabic{footnote}}
\setcounter{footnote}{0}

\section{Introduction}\label{sec_intro}

The theory of total positivity has, for the past 95 years, played an increasingly important role in many aspects of the mathematical sciences. The theory was largely initiated by the work of Schoenberg \cite{Schoenberg} and Krein \cite{Krein} in the 1930's, P\'olya \cite{Polya} in the 1940's, and Gantmacher and Krein \cite{GantmacherKrein} and Karlin \cite{Karlin} in the 1950's, and it is remarkable that each of those early authors was motivated by a wide variety of considerations such as approximation theory, frequency sequences and functions, integral operators, mathematical economics, ordinary differential equations and related Green's functions, reliability theory, splines, statistics, and variation-diminishing transformations.

Since the appearance of Karlin's monograph \cite{Karlin}, the theory of total positivity has broadened immensely, to the point where it arises in combinatorics, classical special functions, Gr\"obner bases, matrix analysis, Lie groups, harmonic analysis, special functions of matrix argument, mathematical physics, operations research, partial differential equations, stochastic processes, actuarial mathematics, conformal field theory (high energy physics), statistical mechanics, evolutionary biology, computer-aided geometric design, and many other areas. We refer to Gantmacher and Krein \cite{GantmacherKrein}, Gasca and Micchelli \cite{GascaMicchelli}, Karlin \cite{Karlin,Karlin2}, Pinkus \cite{Pinkus}, and Gelfand, Kapranov, and Zelevinsky \cite{GKZ} for various accounts and applications of the classical theory and numerous references to the larger literature.

Let $\mcalD \subseteq \R^2$ and $d \in \N$. In the classical setting, a kernel $K\colon \mcalD \to \R$ is {\it totally positive of order}~$d$, denoted TP$_d$, if the $n \times n$ determinant
\begin{equation}
\det(K(x_j,y_k)) =
\begin{vmatrix}
K(x_1,y_1) & K(x_1,y_2) & \cdots & K(x_1,y_n) \\
K(x_2,y_1) & K(x_2,y_2) & \cdots & K(x_2,y_n) \\
\vdots & \vdots & & \vdots \\
K(x_n,y_1) & K(x_n,y_2) & \cdots & K(x_n,y_n)
\end{vmatrix}\label{eq_det_K}
\end{equation}
is nonnegative for all $n=1,\dots,d$, and for all $x_1 > \cdots > x_d$ and $y_1 > \cdots > y_d$ such that~${(x_j,y_k) \in \mcalD}$ for all $j,k=1,\dots,d$. Equivalently, for a kernel $K \colon \mcalD \to [0,\infty)$, total positivity of order $d$ means that all minors of the $d \times d$ matrix $(K(x_i,y_j))$ are nonnegative whenever~${x_1 > \cdots > x_d}$ and $y_1 > \cdots > y_d$, i.e., all $n \times n$ submatrices of the matrix $(K(x_i,y_j))$ have nonnegative determinant, for all $n=1,\dots,d$.

Consider the kernel
\begin{equation}
K(x,y) = \exp(xy), \qquad (x,y) \in \R^2,\label{eq_exp_tp}
\end{equation}
a function that is the fundamental example of a totally positive kernel. This kernel will provide a guiding role in our subsequent results. It is well known that this kernel is totally positive of order $d$ for all $d \in \N$, a property that is denoted by TP$_\infty$. Moreover, the minor \eqref{eq_det_K} is positive for all $n \in \N$, so we say that $K$ is {\it strictly totally positive of order infinity}, denoted STP$_\infty$.

Returning to the general definition in \eqref{eq_det_K}, it may be observed that the classical concept of total positivity is defined (only) for kernels defined on $\R^2$ and for all orders $d \in \N$. On the other hand, for kernels $K \colon \R^m \to \R$, where $m \ge 2$, the concept of total positivity is defined only for orders $d \le 2$, with $d=1$ signifying that $K$ is nonnegative; specifically, the concept of {\it multivariate totally positivity of order $2$} for kernels $K \colon \R^m \to [0,\infty)$ was defined in the article~\cite{karlinrinott80}. Thus no general treatment has yet been developed for the concept of multivariate total positivity of order $d$, for any $d > 2$, when the kernel $K$ is defined on $\R^m$, for arbitrary $m > 2$.

In this article, for a given positive integer $m$, we follow \cite{Johnson_Richards,wanglixu} by applying the theory of Cayley's first hyperdeterminant to define the concept of {\it hyperdeterminantal total positivity} (HTP) for a kernel $K \colon \R^{2m} \to \R$. For the case in which $m=1$, this definition reduces to the classical concept of total positivity which was defined through the minors in \eqref{eq_det_K}. Generalizing the fundamental example in \eqref{eq_exp_tp} of a classical totally positive kernel, we shall establish that the kernel
\begin{equation}
K(x_1,\dots,x_{2m}) = \exp(x_1\cdots x_{2m}), \qquad (x_1,\dots,x_{2m}) \in \R^{2m},\label{eq_exp_htp}
\end{equation}
is a HTP kernel of arbitrarily large order. We show, moreover, that the resulting hyperdeterminants are strictly positive, which generalizes the STP$_\infty$ property of the fundamental example~\eqref{eq_exp_tp}.

We remark that the articles \cite{wanglixu} derived results for the total positivity of Hankel and Vandermonde arrays that are equivalent to the hyperdeterminantal properties of those kernels. However, to the best of our knowledge, our results in the present article appear to be the first to connect the theory of HTP arrays with the generalized hypergeometric functions of matrix argument.

In Section \ref{sec_hyperdeterminants}, we provide the definition of the hyperdeterminant and several of its properties. We provide in Section \ref{sec_BC_for_hyperdets} a hyperdeterminantal Binet--Cauchy formula, due to Matsumoto \cite{matsumoto}, and then we apply that result to obtain a Schur function summation formula for the hyperdeterminant constructed from the kernel \eqref{eq_exp_htp}.

In Section \ref{sec_extension_HC}, we extend to the hyperdeterminantal setting an integral of Harish-Chandra \cite{harishchandra} that now appears prominently in the theory of total positivity \cite{grossrichards89,grossrichards95}. Further, we apply the extended integral to derive Schur function summation formulas for certain hyperdeterminants defined in terms of the classical generalized hypergeometric series. In Section \ref{sec_HTP}, we provide the definition of hyperdeterminantal total positivity, obtain several examples of such kernels, and generalize the classical basic composition formula \cite[p.~17]{Karlin}. As consequences of the hyperdeterminantal Binet--Cauchy formula and the generalized basic composition formula, we demonstrate how numerous examples of HTP kernels can be constructed.

Finally, in Section \ref{sec_conclusions}, we describe some directions for future research that are opened by this article. In particular, we raise the possibility of investigating further generalizations of HTP kernels by means of the theory of finite reflection groups, and we raise the problem of deriving hyperdeterminantal generalizations of the FKG inequality.

\section{Hyperdeterminants}
\label{sec_hyperdeterminants}

For any collection of indices $r_1,\dots,r_l \in \{1,\dots,n\}$, let $A(r_1,\dots,r_l) \in \C$ and form the multidimensional \textit{array}, or \textit{tensor},
$A = (A(r_1,\dots,r_l))_{1 \le r_1,\dots,r_l \le n}$. Also let $\frS_n$ denote the symmetric group on $n$ symbols. Cayley's first definition in \cite{cayley46,cayley89, cayley45} of the {\it hyperdeterminant} of the array~$A$~is
\[
\Det\bigl(A(r_1,\dots,r_l)\bigr)_{1 \le r_1,\dots,r_l \le n}
= \frac{1}{n!} \sum_{\sigma_1 \in \frS_n} \cdots \sum_{\sigma_l \in \frS_n} \left(\prod_{k=1}^l \sgn(\sigma_k)\right) \cdot \prod_{j=1}^n A(\sigma_1(j),\dots,\sigma_l(j)).
\]
By replacing each $\sigma_r$ in the multiple summation by $\sigma_0 \sigma_r$, where $\sigma_0$ is an odd permutation, we find that $\Det\bigl(A(r_1,\dots,r_l)\bigr) = (-1)^l \Det\bigl(A(r_1,\dots,r_l)\bigr)$. Consequently, $\Det\bigl(A(r_1,\dots,r_l)\bigr) \equiv 0$ if~$l$ is odd, a well known result; see, e.g., \cite[Section 2]{luquethibon}. Therefore, we assume henceforth that $l$ is even, $l = 2m$, so that
\begin{gather}
\Det(A(r_1,\dots,r_{2m}))_{1 \le r_1,\dots,r_{2m} \le n} \nonumber\\
\qquad:= \frac{1}{n!} \sum_{\sigma_1 \in \frS_n} \cdots \sum_{\sigma_{2m} \in \frS_n} \left(\prod_{k=1}^{2m} \sgn(\sigma_k)\right) \cdot \prod_{j=1}^n A(\sigma_1(j),\dots,\sigma_{2m}(j)).\label{hyperdet}
\end{gather}
For simplicity, we will denote $\Det(A(r_1,\dots,r_{2m}))_{1 \le r_1,\dots,r_{2m} \le n}$ by $\Det(A)$ if there is no possibility of confusion.

The hyperdeterminant satisfies many properties that generalize the properties of the classical determinant. For a wide range of such properties, and numerous applications, we refer to~\cite{cayley46,cayley89,cayley45,evans,GKZ1,GKZ,hval,lecat19,oldenburger,rice,sokolov60, sokolov72}. For applications to mathematical physics and to the calculation of Selberg's famous integral, we refer to \cite{boussicault,luquethibon03,luquethibon}.

{\allowdisplaybreaks Let us express the formula \eqref{hyperdet} in two ways. First, observe that the sum over $\sigma_{2m}$ is a~classical determinant: For fixed permutations $\sigma_1,\dots,\sigma_{2m-1} \in \frS_n$,
\begin{gather*}
\sum_{\sigma_{2m} \in \frS_n} \sgn(\sigma_{2m}) \prod_{j=1}^n A(\sigma_1(j),\dots,\sigma_{2m}(j)) \\
\qquad\equiv \det(A(\sigma_1(i),\dots,\sigma_{2m-1}(i),j))_{1 \le i,j \le n} \\
\qquad= \sgn(\sigma_{2m-1}) \det\bigl(A\bigl(\sigma_1\sigma_{2m-1}^{-1}(i),\dots,\sigma_{2m-2}\sigma_{2m-1}^{-1}(i),i,j\bigr)\bigr)_{1 \le i,j \le 2}.
\end{gather*}
Therefore,
\begin{align*}
n! \Det(A) ={}& \sum_{\sigma_1,\dots,\sigma_{2m-1} \in \frS_n}
\left(\prod_{k=1}^{2m-2} \sgn(\sigma_k)\right)\\
&
\times \det\bigl(A(\sigma_1\sigma_{2m-1}^{-1}(i),\dots,\sigma_{2m-2}\sigma_{2m-1}^{-1}(i),i,j)\bigr)_{1 \le i,j \le n} .
\end{align*}
Replacing $\sigma_k$ by $\sigma_k \sigma_{2m-1}$, $1 \le k \le 2m-2$, we obtain
\begin{gather}
\Det(A)
= \sum_{\sigma_1,\dots,\sigma_{2m-2} \in \frS_n} \left(\prod_{k=1}^{2m-2} \sgn(\sigma_k)\right) \cdot \det(A(\sigma_1(i),\dots,\sigma_{2m-2}(i),i,j))_{1 \le i,j \le n}.\label{recurrencetwobytwo}
\end{gather}
This represents $\Det(A)$ as an alternating multisum of classical determinants.

}

The second way to express the hyperdeterminant is by rewriting \eqref{hyperdet} as
\begin{align*}
n! \Det(A)={}& \sum_{\sigma_{2m-1},\sigma_{2m} \in \frS_n} \prod_{k=2m-1}^{2m} \sgn(\sigma_k) \\
& \times \sum_{\sigma_1,\dots,\sigma_{2m-2} \in \frS_n} \prod_{k=1}^{2m-2} \sgn(\sigma_k) \cdot \prod_{j=1}^n A(\sigma_1(j),\dots,\sigma_{2m-2}(j),\sigma_{2m-1}(j),\sigma_{2m}(j)).
\end{align*}
For fixed $r_{2m-1},r_{2m} \in \{1,\dots,n\}$, define the multidimensional array
$
B_{r_{2m-1},r_{2m}}(r_1,\dots,r_{2m-2}) = A(r_1,\dots,r_{2m-2},r_{2m-1},r_{2m})
$,
where $r_1,\dots,r_{2m-2} \hspace{-0.15pt}\in\hspace{-0.15pt} \{1,\dots,n\}$. Then, for fixed ${\sigma_{2m-1},\sigma_{2m}\hspace{-0.15pt} \in\hspace{-0.15pt} \frS_n}$,
\begin{gather*}
 \sum_{\sigma_1,\dots,\sigma_{2m-2} \in \frS_n} \prod_{k=1}^{2m-2} \sgn(\sigma_k) \cdot \prod_{j=1}^n A(\sigma_1(j),\dots,\sigma_{2m-2}(j),\sigma_{2m-1}(j),\sigma_{2m}(j)) \\
\qquad= \sum_{\sigma_1,\dots,\sigma_{2m-2} \in \frS_n} \prod_{k=1}^{2m-2} \sgn(\sigma_k) \cdot \prod_{j=1}^n B_{\sigma_{2m-1}(j),\sigma_{2m}(j)}(\sigma_1(j),\dots,\sigma_{2m-2}(j)) \\
\qquad= \sum_{\sigma_1,\dots,\sigma_{2m-2} \in \frS_n} \prod_{k=1}^{2m-2} \sgn(\sigma_k) \cdot \prod_{j=1}^n B_{\sigma_{2m-1}\sigma_{2m}^{-1}(j),j}\bigl(\sigma_1\sigma_{2m}^{-1}(j),\dots,\sigma_{2m-2}\sigma_{2m}^{-1}(j)\bigr) \\
\qquad= \sum_{\sigma_1,\dots,\sigma_{2m-2} \in \frS_n} \prod_{k=1}^{2m-2} \sgn(\sigma_k) \cdot \prod_{j=1}^n B_{\sigma_{2m-1}\sigma_{2m}^{-1}(j),j}(\sigma_1(j),\dots,\sigma_{2m-2}(j)).
\end{gather*}
Therefore,
\begin{align*}
\Det(A) ={} & \frac{1}{n!} \sum_{\sigma_{2m-1},\sigma_{2m} \in \frS_n} \prod_{k=2m-1}^{2m} \sgn(\sigma_k) \\
& \times \sum_{\sigma_1,\dots,\sigma_{2m-2} \in \frS_n} \prod_{k=1}^{2m-2} \sgn(\sigma_k) \cdot \prod_{j=1}^n B_{\sigma_{2m-1}\sigma_{2m}^{-1}(j),j}(\sigma_1(j),\dots,\sigma_{2m-2}(j)).
\end{align*}
On replacing $\sigma_{2m-1}$ by $\sigma_{2m-1}\sigma_{2m}$, we obtain
\begin{align}
\Det(A) ={} & \sum_{\sigma_{2m-1} \in \frS_n} \sgn(\sigma_{2m-1}) \nonumber \\
& \times \sum_{\sigma_1,\dots,\sigma_{2m-2} \in \frS_n} \prod_{k=1}^{2m-2} \sgn(\sigma_k) \cdot \prod_{j=1}^n B_{\sigma_{2m-1}(j),j}(\sigma_1(j),\dots,\sigma_{2m-2}(j)) \nonumber \\
= {}& n! \sum_{\sigma_{2m-1} \in \frS_n} \sgn(\sigma_{2m-1})
\Det(B_{\sigma_{2m-1}(j),j}(\sigma_1(j),\dots,\sigma_{2m-2}(j)).\label{recurrence_hyperdet}
\end{align}
This represents $\Det(A)$ as a single alternating sum of hyperdeterminants.

\section{A Binet--Cauchy theorem for hyperdeterminants}
\label{sec_BC_for_hyperdets}

In reviewing the hyperdeterminantal literature, one observes that hyperdeterminants satisfy various generalizations of the classical Binet--Cauchy formula. Variants of the Binet--Cauchy formula for multidimensional arrays have appeared in the literature going back to \cite{gasparyan,hval}, and other analogs of the formula have appeared in \cite{barvinok,boussicault,dionisi,evans,karlinrinott88,matsumoto}. In all of those cited articles, the proof of the corresponding generalized Binet--Cauchy formula is analogous to the proof of the classical Binet--Cauchy formula \cite{Karlin} and also to generalizations of the formula that are based on the theory of finite reflection groups \cite{grossrichards89,grossrichards95}.

The statement and proof of the following generalized Binet--Cauchy formula are due to Matsumoto \cite[Proposition 2.1]{matsumoto}.

\begin{Proposition}[Matsumoto \cite{matsumoto}]
\label{prop_binet_cauchy}
Let $(\mcalM,\mu)$ be a sigma-finite measure space and let $\{\phi_{k,r}\mid\allowbreak 1 \le k \le 2m,\, 1 \le r \le n\}$ be a collection of complex-valued functions on $\mcalM$ such that, for all~${r_1,\dots,r_{2m}}$ satisfying $1 \le r_1,\dots,r_{2m} \le n$, the integral
\begin{equation}
A(r_1,\dots,r_{2m}) := \int_\mcalM \prod_{k=1}^{2m} \phi_{k,r_k}(x) \dd\mu(x),\label{BCkernel}
\end{equation}
converges absolutely. Then, with $\mcalM^n := \mcalM \times \cdots \times \mcalM$ $(n$ factors$)$,
\begin{equation}
\Det(A(r_1,\dots,r_{2m})) = \frac{1}{n!} \int_{\mcalM^n} \prod_{k=1}^{2m} \det(\phi_{k,r}(x_s))_{1 \le r,s \le n} \cdot \prod_{j=1}^n \dd\mu(x_j).\label{binetcauchy1}
\end{equation}
Moreover, if $\mcalM$ is totally ordered then
\begin{equation}
\Det(A(r_1,\dots,r_{2m})) = \idotsint\limits_{\substack{x_1 > \cdots > x_n {\phantom{\big|}}\\ x_1,\dots,x_n \in \mcalM}} \prod_{k=1}^{2m} \det(\phi_{k,r}(x_s))_{1 \le r,s \le n} \cdot \prod_{j=1}^n \dd\mu(x_j).\label{binetcauchy2}
\end{equation}
\end{Proposition}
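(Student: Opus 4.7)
The plan is to start from the definition \eqref{hyperdet} of $\Det(A)$, substitute the integral representation \eqref{BCkernel}, interchange the $2m$ finite summations with the $n$-fold integral using Fubini's theorem, and recognise that the resulting sums over $\frS_n^{2m}$ decouple into a product of $2m$ classical determinants via the Leibniz formula.

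Concretely, iterating \eqref{BCkernel} $n$ times gives
\[
\prod_{j=1}^n A(\sigma_1(j),\dots,\sigma_{2m}(j)) = \int_{\mcalM^n} \prod_{k=1}^{2m} \prod_{j=1}^n \phi_{k,\sigma_k(j)}(x_j) \prod_{j=1}^n \dd\mu(x_j),
\]
where Fubini is legitimate because the absolute convergence hypothesis on $A$ is precisely the statement that $\int_\mcalM \prod_{k=1}^{2m} |\phi_{k,r_k}|\, \dd\mu < \infty$ for every index tuple, and the outer symmetric-group sums are finite. Substituting into \eqref{hyperdet} and interchanging sum and integral, the integrand regroups by the index $k$ so that the sums factor:
\[
\sum_{\sigma_1,\dots,\sigma_{2m} \in \frS_n} \prod_{k=1}^{2m} \sgn(\sigma_k) \prod_{j=1}^n \phi_{k,\sigma_k(j)}(x_j) = \prod_{k=1}^{2m} \sum_{\sigma_k \in \frS_n} \sgn(\sigma_k) \prod_{j=1}^n \phi_{k,\sigma_k(j)}(x_j).
\]
By the Leibniz formula each inner factor equals $\det(\phi_{k,r}(x_s))_{1 \le r,s \le n}$, which proves \eqref{binetcauchy1}.

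For \eqref{binetcauchy2}, the key observation is that the integrand on the right of \eqref{binetcauchy1} is symmetric in $(x_1,\dots,x_n)$: under any permutation $\tau$, each of the $2m$ determinants is multiplied by $\sgn(\tau)$, and the total factor is $\sgn(\tau)^{2m} = 1$. When $\mcalM$ is totally ordered, the diagonal set on which $x_i = x_j$ for some $i \ne j$ contributes nothing, since each determinant $\det(\phi_{k,r}(x_s))$ vanishes whenever two of its columns coincide; the complement partitions into the $n!$ chambers $\{x_{\tau(1)} > \cdots > x_{\tau(n)}\}$ indexed by $\tau \in \frS_n$, each giving the same contribution as the fundamental chamber $x_1 > \cdots > x_n$ by the symmetry just noted. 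Hence the integral in \eqref{binetcauchy1} equals $n!$ times that over $\{x_1 > \cdots > x_n\}$, cancelling the prefactor $1/n!$ and yielding \eqref{binetcauchy2}. The only genuinely delicate step is the Fubini interchange, and that is immediate from the hypothesis; the rest is essentially the proof of the classical Binet--Cauchy formula carried out $2m$ times in parallel, so I do not expect any serious obstacle.
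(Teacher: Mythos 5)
Your argument is correct and is essentially identical to the paper's own proof: both substitute the integral representation \eqref{BCkernel} into the definition \eqref{hyperdet}, interchange the finite sums with the $n$-fold integral, factor the multi-sum over $\frS_n^{2m}$ into a product of $2m$ Leibniz determinants to get \eqref{binetcauchy1}, and then derive \eqref{binetcauchy2} by discarding the diagonal (where the determinants vanish) and using the $\sgn(\tau)^{2m}=1$ symmetry to collapse the $n!$ Weyl chambers onto the fundamental one. No substantive differences.
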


\begin{proof}
By \eqref{hyperdet},
\begin{align*}
n! \Det(A(r_1,\dots,r_{2m})) &= \sum_{\sigma_1 \in \frS_n} \cdots \sum_{\sigma_{2m} \in \frS_n} \left(\prod_{k=1}^{2m} \sgn(\sigma_k)\right) \prod_{j=1}^n \left(\int_\mcalM \prod_{k=1}^{2m} \phi_{k,\sigma_k(j)}(x) \dd\mu(x)\right) \nonumber \\
&= \int_{\mcalM^n} \sum_{\sigma_1 \in \frS_n} \cdots \sum_{\sigma_{2m} \in \frS_n} \prod_{k=1}^{2m} \Biggl(\sgn(\sigma_k) \prod_{j=1}^n \phi_{k,\sigma_k(j)}(x_j)\Biggr) \prod_{j=1}^n \dd\mu(x_j).
\end{align*}
On writing the multi-sum over $\sigma_1,\dots,\sigma_{2m}$ as a product of summations, we obtain
\begin{align}
n! \Det(A(r_1,\dots,r_{2m})) &= \int_{\mcalM^n} \prod_{k=1}^{2m} \Biggl(\sum_{\sigma_k \in \frS_n} \sgn(\sigma_k) \prod_{j=1}^n \phi_{k,\sigma_k(j)}(x_j)\Biggr) \prod_{j=1}^n \dd\mu(x_j) \nonumber \\
&= \int_{\mcalM^n} \prod_{k=1}^{2m} \det(\phi_{k,r}(x_s))_{1\le r,s\le n} \prod_{j=1}^n \dd\mu(x_j),\label{eq_binetcauchy3}
\end{align}
and this establishes \eqref{binetcauchy1}.

Consider next the case in which $\mcalM$ is totally ordered. On observing that all $k$ determinants in the integrand in \eqref{eq_binetcauchy3} are identically zero if any two $x_s$ coincide, it follows that the integral in \eqref{eq_binetcauchy3} reduces to an integral over the union of disjoint open {\it Weyl chambers},
\[
\bigcup_{\sigma \in \frS_n} \{(x_1,\dots,x_n) \in \mcalM^n\mid x_{\sigma(1)} > \cdots > x_{\sigma(n)}\}.
\]
For each $\sigma \in \frS_n$, consider the corresponding Weyl chamber
\[
\CC_n(\sigma) := \{(x_1,\dots,x_n) \in \mcalM^n\mid x_{\sigma(1)} > \cdots > x_{\sigma(n)}\};
\]
then
\[
n! \Det(A(r_1,\dots,r_{2m})) = \sum_{\sigma \in \frS_n} \int_{\CC_n(\sigma)} \prod_{k=1}^{2m} \det(\phi_{k,r}(x_s))_{1\le r,s\le n} \prod_{j=1}^n \dd\mu(x_j).
\]
On the chamber $\CC_n(\sigma)$ we make the change-of-variables $(x_{\sigma(1)},\dots,x_{\sigma(n)}) \to (x_1,\dots,x_n)$, thereby obtaining
\begin{gather*}
\int_{\CC_n(\sigma)} \prod_{k=1}^{2m} \!\det(\phi_{k,r}(x_s))_{1\le r,s\le n} \prod_{j=1}^n \dd\mu(x_j) = \int_{\CC_n} \prod_{k=1}^{2m} (\sgn(\sigma) \det(\phi_{k,r}(x_s))_{1\le r,s\le n}) \prod_{j=1}^n\! \dd\mu(x_j),
\end{gather*}
where $\CC_n = \{(x_1,\dots,x_n) \in \mcalM^n\mid x_1 > \cdots > x_n\}$ is the {\it fundamental Weyl chamber}. Noting that there are $n!$ chambers, each of which corresponds to a unique permutation $\sigma \in \frS_n$, and also that $(\sgn(\sigma))^{2m} \equiv 1$, we obtain \eqref{binetcauchy2}.
\end{proof}

We now give an application of Proposition~\ref{prop_binet_cauchy} that will be crucial for establishing later the hyperdeterminantal total positivity properties of the kernel \eqref{eq_exp_htp}.

\begin{Example}
\label{exponentialhyperdet}
In Proposition~\ref{prop_binet_cauchy} set $\mcalM = \{0,1,2,\dots\}$, the set of nonnegative integers. Let~$\mu$ be the discrete (counting) measure with weight $\mu(\{i\}) = 1/i!$, $i \in \mcalM$. For $1 \le k \le 2m$, $1 \le r \le n$, and $i \in \mcalM$, let $\xx_k = (x_{k,1},\dots,x_{k,n}) \in \R^n$ and define $\phi_{k,r}(i) = x_{k,r}^i$. By \eqref{BCkernel},
\[
A(r_1,\dots,r_{2m}) = \sum_{i=0}^\infty \dfrac{x_{1,r_1}^i \cdots x_{2m,r_{2m}}^i}{i!}
= \exp(x_{1,r_1} \cdots x_{2m,r_{2m}}).
\]
On applying \eqref{binetcauchy2}, we obtain the hyperdeterminantal summation formula
\begin{equation}
\Det(\exp(x_{1,r_1} \cdots x_{2m,r_{2m}})) = \sum_{i_1 > \cdots > i_n \ge 0} \frac{1}{i_1! \cdots i_n!} \cdot \prod_{k=1}^{2m} \det\bigl(x_{k,r}^{i_s}\bigr)_{1 \le r,s\le n}.\label{expbinetcauchy}
\end{equation}

Define $\lambda_s = i_s - n + s$, $s=1,\dots,n$; then $\lambda_1 \ge \cdots \ge \lambda_n \ge 0$, so the vector $\lambda = (\lambda_1,\dots,\lambda_n)$ is a {\it partition}. Also let
\begin{equation}
V(\xx_k) = \prod_{1 \le r < s \le n} (x_{k,r} - x_{k,s}),\label{eq_vandermonde}
\end{equation}
be the Vandermonde determinant in the variables $\xx_k = (x_{k,1},\dots,x_{k,n})$; then
\begin{equation}
s_\lambda(\xx_k) = \frac{\det\bigl(x_{k,r}^{i_s}\bigr)_{1 \le r,s\le n}}{V(\xx_k)}\label{schur_function}
\end{equation}
is the well-known Schur function \cite[p.~40]{macdonald}. Writing \eqref{schur_function} in the form
\begin{equation}
\det\bigl(x_{k,r}^{i_s}\bigr)_{1 \le r,s\le n} = V(\xx_k) s_\lambda(\xx_k),\label{schurhyperdet}
\end{equation}
and applying this to \eqref{expbinetcauchy}, we obtain a summation formula that expresses the corresponding hyperdeterminant in terms of a sum of weighted products of Schur functions
\begin{equation}
\frac{\Det(\exp(x_{1,r_1} \cdots x_{2m,r_{2m}}))}{\prod_{k=1}^{2m} V(\xx_k)} = \sum_\lambda \prod_{j=1}^n \frac{1}{(\lambda_j + n-j)!} \cdot \prod_{k=1}^{2m} s_\lambda(\xx_k),\label{schurhyperdetsum}
\end{equation}
where the sum is taken over all partitions $\lambda = (\lambda_1,\dots,\lambda_n)$ of all nonnegative integers such that~$\lambda$ is of length $\ell(\lambda) \le n$.
\end{Example}

\section[A hyperdeterminantal extension of an integral of Harish-Chandra]{A hyperdeterminantal extension\\ of an integral of Harish-Chandra}
\label{sec_extension_HC}

In this section, we extend an integral of Harish-Chandra \cite{harishchandra} that has played a prominent role in the theory of total positivity \cite{grossrichards89,grossrichards95}. Further, we apply the extended integral to derive Schur function summation formulas for some hyperdeterminants defined in terms of the classical generalized hypergeometric series, and this will also connect the theory of hyperdeterminatal total positivity with the hypergeometric functions of Hermitian matrix argument.

Denote by $\mcalHn$ the space of $n \times n$ Hermitian matrices. In the sequel, if $X \in \mcalHn$ has eigenvalues $x_1,\dots,x_n$ and $\lambda$ is a partition then we will write $s_\lambda(X)$ for $s_\lambda(x_1,\dots,x_n)$, the Schur function with arguments $x_1,\dots,x_n$. Since every Hermitian matrix can be diagonalized by a~unitary transformation, then $s_\lambda(X)$ is a {\it unitarily invariant} polynomial function on $\mcalHn$.

For $X_1, X_2 \in \mcalHn$, we write $s_\lambda(X_1X_2)$ to denote $s_\lambda(w_1,\dots,w_n)$, where $w_1,\dots,w_n$ are the eigenvalues of $X_1X_2$. Since the matrices $X_1X_2$ and $X_2X_1$ have the same eigenvalues, then~${s_\lambda(X_1X_2) = s_\lambda(X_2X_1)}$.

For the case in which $m=1$, the summation formula \eqref{schurhyperdetsum} reduces to a sum that arose crucially in \cite{grossrichards89}, having been used there to derive a new proof that the fundamental classical example, $K(x,y) = \exp(xy)$, $(x,y) \in \R^2$, is strictly totally positive of order infinity (STP$_\infty$). Let~${
\beta_n = \prod_{j=1}^n (j-1)!}
$
and denote by $U(n)$ the group of $n \times n$ unitary matrices. For generic~${U \in U(n)}$ let $\dd U$ be the Haar measure on $U(n)$, normalized to have total volume $1$.

As noted in \cite{grossrichards89}, the special case with $m=1$ of \eqref{schurhyperdetsum} also appears prominently in an integral formula of Harish-Chandra \cite{harishchandra}:

\begin{Proposition}[Harish-Chandra \cite{harishchandra}]
\label{prop_hc_integral}
Suppose that $X_1, X_2 \in \mcalHn$ have eigenvalues $x_{1,1},\allowbreak x_{1,2}, \dots,x_{1,n}$ and $x_{2,1},x_{2,2},\dots,x_{2,n}$, respectively. Then
\begin{equation}
\frac{\det(\exp(x_{1,r_1} x_{2,r_2}))_{1 \le r_1,r_2\le n}}{V(x_1) V(x_2)} = \beta_n^{-1} \int_{U(n)} \exp\bigl(\tr UX_1U^{-1}X_2\bigr) \dd U.\label{hcintegraldet}
\end{equation}
\end{Proposition}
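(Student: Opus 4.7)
The strategy is to expand both sides of \eqref{hcintegraldet} into sums of products of Schur functions in the eigenvalues $\xx_1 = (x_{1,1},\dots,x_{1,n})$ of $X_1$ and $\xx_2 = (x_{2,1},\dots,x_{2,n})$ of $X_2$, and to check that the two expansions coincide coefficientwise. For the left-hand side, I would invoke the summation formula \eqref{schurhyperdetsum} specialized to $m=1$ (which in that case reduces to the classical Binet--Cauchy identity used in Example \ref{exponentialhyperdet}); this immediately yields
\[
\frac{\det(\exp(x_{1,r_1}x_{2,r_2}))_{1\le r_1,r_2\le n}}{V(\xx_1) V(\xx_2)} = \sum_\lambda \prod_{j=1}^n \frac{1}{(\lambda_j + n - j)!} \, s_\lambda(X_1) s_\lambda(X_2),
\]
where the sum runs over partitions $\lambda$ with $\ell(\lambda) \le n$.

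For the right-hand side, I would first expand the integrand as a power series, $\exp(\tr UX_1U^{-1}X_2) = \sum_{k\ge 0} (\tr UX_1U^{-1}X_2)^k/k!$, and then expand each power in the Schur basis using the classical identity $p_1^k = \sum_{|\lambda|=k} f^\lambda s_\lambda$, where $f^\lambda$ is the number of standard Young tableaux of shape $\lambda$ (the power sum is $p_1 = \tr$ applied to eigenvalues). The crucial analytic step is then the unitary integration formula
\[
\int_{U(n)} s_\lambda(UX_1U^{-1}X_2) \, \dd U = \frac{s_\lambda(X_1) \, s_\lambda(X_2)}{s_\lambda(I_n)},
\]
which follows from Schur's lemma applied to the irreducible polynomial representation $V_\lambda$ of $U(n)$ (whose character on a matrix is precisely $s_\lambda$ of its eigenvalues, and whose dimension is $s_\lambda(I_n)$). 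Exchanging summation and integration is justified by the absolute and uniform convergence of the exponential series on the compact group $U(n)$, and yields
\[
\int_{U(n)} \exp\bigl(\tr UX_1U^{-1}X_2\bigr) \, \dd U = \sum_\lambda \frac{f^\lambda}{|\lambda|! \, s_\lambda(I_n)} \, s_\lambda(X_1) s_\lambda(X_2).
\]

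Comparing the two expansions reduces the proof to the combinatorial identity
\[
\frac{f^\lambda}{|\lambda|! \, s_\lambda(I_n)} = \beta_n^{-1} \prod_{j=1}^n \frac{1}{(\lambda_j + n - j)!}.
\]
This follows by combining the hook-length formula $f^\lambda = |\lambda|!/\prod_{(i,j)\in\lambda} h(i,j)$ with the hook-content formula $s_\lambda(I_n) = \prod_{(i,j)\in\lambda} (n + c(i,j))/h(i,j)$, so that the left-hand side equals $1/\prod_{(i,j)\in\lambda} (n + c(i,j))$; the well-known content-product identity $\prod_{(i,j)\in\lambda}(n+c(i,j)) = \prod_{j=1}^n (\lambda_j + n - j)!/(n-j)!$ together with $\beta_n = \prod_{j=1}^n (n-j)!$ then finishes the matching.

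\textbf{Main obstacle.} The conceptual skeleton is clean, but the main friction is the combinatorial bookkeeping required to reconcile the two different natural parametrizations: the $s_\lambda(I_n)$ denominators produced by Schur's lemma on $U(n)$, versus the ascending-factorial factors $(\lambda_j + n - j)!$ produced by the Schur function presentation of the left-hand side. The hook-content identity is exactly the bridge between these two viewpoints; pinning down that identity, and justifying the termwise integration of the exponential series against the Haar measure, are the only steps that require care.
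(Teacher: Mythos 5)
Your proposal is correct and follows essentially the same route the paper takes: the paper does not prove Proposition~\ref{prop_hc_integral} directly (it cites Harish-Chandra), but it derives the equivalent identity \eqref{hcintegralsum} by combining the $m=1$ case of the Binet--Cauchy expansion \eqref{schurhyperdetsum} with the zonal-polynomial mean-value property \eqref{meanvalue}, which is exactly your Schur's-lemma integration step since $Z_\lambda=\omega_\lambda s_\lambda$ with $\omega_\lambda=f^\lambda$. Your hook-length/hook-content bookkeeping is just an equivalent form of the paper's computation $(n)_\lambda=\beta_n^{-1}\prod_{j=1}^n(\lambda_j+n-j)!$ carried out at the end of the proof of Theorem~\ref{thm_extd_hc}.
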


Setting $m=1$ in \eqref{schurhyperdetsum}, and then comparing the right-hand sides of \eqref{hcintegraldet} and \eqref{schurhyperdetsum}, it follows that
\begin{equation}
\int_{U(n)} \exp\bigl(\tr UX_1U^{-1}X_2\bigr) \dd U = \beta_n \sum_\lambda \prod_{j=1}^n \frac{1}{(\lambda_j + n-j)!} \cdot s_\lambda(x_1) s_\lambda(x_2).\label{hcintegralsum}
\end{equation}

As it turns out, an alternative proof of \eqref{hcintegralsum} was obtained in \cite{grossrichards89} by means of the theory of zonal polynomials and the hypergeometric functions of matrix argument. To extend the formula~\eqref{hcintegralsum} to the setting of hyperdeterminants, we will again apply the theory of zonal polynomials. In the proof of the next result, we will largely retain the notation of \cite[Section 2]{grossrichards89} and will follow closely the exposition given there. In particular, if $\lambda = (\lambda_1,\dots,\lambda_n)$ is a partition of length~${\ell(\lambda) \le n}$ then the {\it weight} of $\lambda$ is $|\lambda| = \lambda_1+\cdots+\lambda_n$.

For $a \in \C$ and $j = 0,1,2,\dots$, the {\it classical rising factorial} is defined as
\begin{equation}
(a)_j = \frac{\Gamma(a+j)}{\Gamma(a)} = a(a+1)\cdots (a+j-1).\label{classicalrising}
\end{equation}
For $a \in \C$ and any partition $\lambda$ of length $\ell(\lambda) \le n$, the {\it partitional rising factorial} is
\begin{equation}
(a)_\lambda = \prod_{j=1}^n (a-j+1)_{\lambda_j}.\label{partitionalrising}
\end{equation}
Define
\[
\omega_\lambda = |\lambda|!
\frac{\prod_{1 \le i<j \le n} (\lambda_i-\lambda_j-i+j)}{\prod_{j=1}^n (\lambda_j+n-j)!}
\]
and set
\[
d_\lambda := s_\lambda(I_n) = \frac{\prod_{1 \le i<j \le n} (\lambda_i-\lambda_j-i+j)}{\prod_{j=1}^n (j-1)!}.
\]
Then it is straightforward to see that
\begin{equation}
\omega_\lambda = \frac{|\lambda|! d_\lambda}{(n)_\lambda}.\label{omegadlambda}
\end{equation}
Further, for any $n \times n$ Hermitian matrix $X$ and partition $\lambda$, define the {\it zonal polynomial} as normalized Schur function,
\begin{equation}
Z_\lambda(X) = \omega_\lambda s_\lambda(X).\label{zonalschur}
\end{equation}
As shown in \cite[Section 2]{grossrichards89}, it is a consequence of \eqref{partitionalrising}--\eqref{omegadlambda} that, for all $l=0,1,2,\dots$, the normalization in \eqref{zonalschur} leads to the expansion
\begin{equation}
(\tr X)^l = \sum_{|\lambda| = l} Z_\lambda(X).\label{eq_tracepowers}
\end{equation}

The zonal polynomials $Z_\lambda$ satisfy the {\it mean-value property}: For any $n \times n$ Hermitian matrices~$X_1$ and $X_2$,
\begin{equation}
\int_{U(n)} Z_\lambda\bigl(UX_1U^{-1}X_2\bigr) \dd U = \frac{Z_\lambda(X_1) Z_\lambda(X_2)}{Z_\lambda(I_n)}.\label{meanvalue}
\end{equation}
The mean-value property also implies that
\begin{gather*}
\int_{U(n)} \int_{U(n)} Z_\lambda\bigl(U_1 X_1 U_1^{-1} U_2 X_2 U_2^{-1}\bigr) \dd U_1 \dd U_2 \\
\qquad\equiv \int_{U(n)} \int_{U(n)} Z_\lambda\bigl(U_1 X_1 U_1^{-1} \cdot U_2 X_2 U_2^{-1}\bigr) \dd U_1 \dd U_2 = \int_{U(n)} \frac{Z_\lambda(X_1) Z_\lambda\bigl(U_2 X_2 U_2^{-1}\bigr)}{Z_\lambda(I_n)} \dd U_2 \\
\qquad= \frac{Z_\lambda(X_1)}{Z_\lambda(I_n)} \int_{U(n)} Z_\lambda\bigl(U_2 X_2 U_2^{-1}\bigr) \dd U_2.
\end{gather*}
Since $s_\lambda$, and hence $Z_\lambda$, is unitarily invariant then $Z_\lambda\bigl(U_2 X_2 U_2^{-1}\bigr) = Z_\lambda(X_2)$ for all $U_2 \in U(n)$. Also since the Haar measure $\dd U_2$ is normalized then we obtain
\begin{equation}
\int_{U(n)} \int_{U(n)} Z_\lambda\bigl(U_1 X_1 U_1^{-1} U_2 X_2 U_2^{-1}\bigr) \dd U_1 \dd U_2 = \frac{Z_\lambda(X_1) Z_\lambda(X_2)}{Z_\lambda(I_n)}.\label{eq_iterated_mvp}
\end{equation}
We now state and prove the promised hyperdeterminantal generalization of \eqref{hcintegralsum}.

\begin{Theorem}
\label{thm_extd_hc}
For $k=1,\dots,2m$, let $X_k$ be an $n \times n$ Hermitian matrix with eigenvalues $x_{k,1},\dots,x_{k,n}$, and denote $s_\lambda(x_{k,1},\dots,x_{k,n})$ by $s_\lambda(X_k)$. Then
\begin{gather}
\int_{U(n)} \cdots \int_{U(n)} \exp\left(\tr \prod_{k=1}^{2m} U_k X_k U_k^{-1}\right) \prod_{k=1}^{2m} \dd U_k \nonumber\\
\qquad= \beta_n \sum_\lambda \frac{1}{\prod_{j=1}^n (\lambda_j + n-j)!} \cdot \frac{1}{(s_\lambda(I_n))^{2m-2}} \prod_{k=1}^{2m} s_\lambda(X_k).\label{hcextended}
\end{gather}
\end{Theorem}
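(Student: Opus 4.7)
The plan is to mirror the derivation of \eqref{hcintegralsum} via zonal polynomials, but iterate the mean-value property across all $2m$ copies of $U(n)$. First, I would expand the exponential as a power series in its trace and invoke \eqref{eq_tracepowers} to rewrite
\[
\exp\left(\tr \prod_{k=1}^{2m} U_k X_k U_k^{-1}\right) = \sum_\lambda \frac{1}{|\lambda|!}\, Z_\lambda\left(\prod_{k=1}^{2m} U_k X_k U_k^{-1}\right),
\]
where $\lambda$ ranges over partitions of length at most $n$; locally uniform convergence of the series on compact sets together with the compactness of $U(n)$ justifies interchanging the sum with the $2m$-fold Haar integral.

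Next, I would integrate the unitary variables one at a time. Grouping $U_1 X_1 U_1^{-1}$ against the product $\prod_{k\ge 2} U_k X_k U_k^{-1}$ and applying \eqref{meanvalue} to the $U_1$-integral yields a factor $Z_\lambda(X_1)/Z_\lambda(I_n)$ multiplying $Z_\lambda$ of the residual product. Iterating this reasoning for $U_2,\dots,U_{2m-1}$ peels off further factors $Z_\lambda(X_k)/Z_\lambda(I_n)$ and leaves $Z_\lambda(U_{2m} X_{2m} U_{2m}^{-1})$; by unitary invariance this equals $Z_\lambda(X_{2m})$, and the final $\dd U_{2m}$-integration is trivial since the Haar measure is normalized. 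The cumulative output is
\[
\int_{U(n)^{2m}} Z_\lambda\left(\prod_{k=1}^{2m} U_k X_k U_k^{-1}\right) \prod_{k=1}^{2m} \dd U_k = \frac{\prod_{k=1}^{2m} Z_\lambda(X_k)}{Z_\lambda(I_n)^{2m-1}},
\]
which reduces to the right-hand side of \eqref{eq_iterated_mvp} when $2m=2$.

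Finally, I would convert back to Schur functions. Substituting $Z_\lambda(X_k)=\omega_\lambda s_\lambda(X_k)$ and $Z_\lambda(I_n)=\omega_\lambda d_\lambda$ collapses the $\omega_\lambda$-powers to $\omega_\lambda/d_\lambda^{2m-2}$; using \eqref{omegadlambda} to substitute $\omega_\lambda = |\lambda|!\, d_\lambda/(n)_\lambda$ then cancels the $1/|\lambda|!$ from the exponential series together with one further power of $d_\lambda$, leaving the coefficient $1/[(n)_\lambda\,(s_\lambda(I_n))^{2m-2}]$. The identification
\[
(n)_\lambda = \frac{1}{\beta_n}\prod_{j=1}^n (\lambda_j + n - j)!,
\]
which follows immediately from \eqref{classicalrising}, \eqref{partitionalrising}, and $\beta_n = \prod_{j=1}^n (j-1)!$ by writing $(n-j+1)_{\lambda_j} = (\lambda_j+n-j)!/(n-j)!$, then produces exactly the coefficient on the right-hand side of \eqref{hcextended}.

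The main obstacle is pure bookkeeping: I would need to ensure that the mean-value property is iterated exactly $2m-1$ times (not $2m$) and that, under the substitution $Z_\lambda \to \omega_\lambda s_\lambda$, the powers of $\omega_\lambda$ and $d_\lambda$ combine to give precisely $(s_\lambda(I_n))^{2m-2}$ in the denominator together with the factor $\beta_n/\prod_j(\lambda_j+n-j)!$, rather than an off-by-one discrepancy. Justifying the swap of summation and integration is a minor technical point that follows from the locally uniform convergence of the exponential series and the compactness of $U(n)^{2m}$.
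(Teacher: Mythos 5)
Your proposal is correct and follows essentially the same route as the paper's own proof: expand the exponential via \eqref{eq_tracepowers}, iterate the mean-value property \eqref{meanvalue} to obtain $\prod_{k=1}^{2m} Z_\lambda(X_k)\big/Z_\lambda(I_n)^{2m-1}$, convert to Schur functions through \eqref{zonalschur} and \eqref{omegadlambda}, and finish with the identity $(n)_\lambda = \beta_n^{-1}\prod_{j=1}^n(\lambda_j+n-j)!$. Your bookkeeping of the powers of $\omega_\lambda$ and $d_\lambda$ checks out, so the coefficient $1/\bigl[(n)_\lambda\,(s_\lambda(I_n))^{2m-2}\bigr]$ is exactly what the paper obtains in \eqref{hcprelimsum}.
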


\begin{proof}
It follows from \eqref{eq_tracepowers} that for any matrix $X$ that is a product of Hermitian matrices,
\[
\exp(\tr X) = \sum_{k=0}^\infty \frac{(\tr X)^k}{k!} = \sum_\lambda \frac{1}{|\lambda|!} Z_\lambda(X).
\]
Applying this formula to expand the integrand in \eqref{hcextended}, we obtain
\begin{equation}
\exp\left(\tr \prod_{k=1}^{2m} U_k X_k U_k^{-1}\right) = \sum_\lambda \frac{1}{|\lambda|!} Z_\lambda\left(\prod_{k=1}^{2m} U_k X_k U_k^{-1}\right).\label{eq_exp_tr_expan}
\end{equation}
We now apply the mean-value property \eqref{meanvalue} to integrate iteratively with respect to the normalized Haar measures $\dd U_1,\dots,\dd U_{2m}$, as in the derivation of \eqref{eq_iterated_mvp}. Then we obtain
\begin{equation}
\int_{U(n)} \cdots \int_{U(n)} Z_\lambda\left(\prod_{k=1}^{2m} U_kX_kU_k^{-1}\right) \prod_{k=1}^{2m} \dd U_k = Z_\lambda(X_1) \prod_{k=2}^{2m} \frac{Z_\lambda(X_k)}{Z_\lambda(I_n)},\label{extendedmeanvalue}
\end{equation}
and it follows from \eqref{eq_exp_tr_expan} and \eqref{extendedmeanvalue} that
\begin{equation}
\int_{U(n)} \cdots \int_{U(n)} \exp\left(\tr \prod_{k=1}^{2m} U_kX_kU_k^{-1}\right) \prod_{k=1}^{2m} \dd U_k = \sum_\lambda \frac{1}{|\lambda|!} Z_\lambda(X_1) \prod_{k=2}^{2m} \frac{Z_\lambda(X_k)}{Z_\lambda(I_n)}.\label{hcprelim}
\end{equation}
Now using \eqref{zonalschur} to express each $Z_\lambda$ in terms of $s_\lambda$, and substituting from \eqref{omegadlambda} for $\omega_\lambda$, the right-hand side of \eqref{hcprelim} becomes
\begin{align}
\sum_\lambda \frac{\omega_\lambda}{|\lambda|!} s_\lambda(X_1) \prod_{k=2}^{2m} \frac{s_\lambda(X_k)}{s_\lambda(I_n)}
&= \sum_\lambda \frac{s_\lambda(I_n)}{(n)_\lambda} s_\lambda(X_1) \prod_{k=2}^{2m} \frac{s_\lambda(X_k)}{s_\lambda(I_n)} \nonumber \\
&= \sum_\lambda \frac{1}{(n)_\lambda} \frac{s_\lambda(X_1) \cdots s_\lambda(X_{2m})}{(s_\lambda(I_n))^{2m-2}}.\label{hcprelimsum}
\end{align}
Next, it is straightforward to verify that
\begin{align*}
(n)_\lambda &= \prod_{j=1}^n (n-j+1)_{\lambda_j}
= \prod_{j=1}^n \frac{\Gamma(n+\lambda_j-j+1)}{\Gamma(n-j+1)}
= \beta_n^{-1} \prod_{j=1}^n (\lambda_j+n-j)!,
\end{align*}
and by substituting this expression for $(n)_\lambda$ into \eqref{hcprelimsum}, we obtain \eqref{hcextended}.
\end{proof}

As a further generalization of Example \ref{exponentialhyperdet}, we have the following.

\begin{Example}
\label{hgfhyperdet}
Let $X = \{0,1,2,\dots\}$, the set of nonnegative integers. Let $\xx_k = (x_{k,1},\dots,x_{k,n}) \in \C^n$, $k=1,\dots,2m$ and, for $i \in X$, define $\phi_{k,r}(i) = x_{k,r}^i$, $1 \le k \le 2m$, $1 \le r \le n$. For nonnegative integers $p$ and $q$, let $a_1,\dots,a_p,b_1,\dots,b_q \in \C$ such that $-b_t+j-1$ is not a nonnegative integer for all $t=1,\dots,q$ and $j=1,\dots,n$.

Define the discrete measure,
\smash{$
\mu(\{i\}) = \frac{(a_1)_i \cdots (a_p)_i}{(b_1)_i \cdots (b_q)_i} \frac{1}{i!}$},
$i \in X$, and also define the multidimensional array $A(r_1,\dots,r_{2m})$ according to \eqref{BCkernel} so that
\begin{align}
A(r_1,\dots,r_{2m}) & = \sum_{i=0}^\infty \frac{(a_1)_i \cdots (a_p)_i}{(b_1)_i \cdots (b_q)_i} \frac{(x_{1,r_1} \cdots x_{2m,r_{2m}})^i}{i!} \nonumber\\
& \equiv {}_pF_q(a_1,\dots,a_p;b_1,\dots,b_q;x_{1,r_1} \cdots x_{2m,r_{2m}}),\label{pfqarray}
\end{align}
where
\begin{equation}
\label{classicalargpfq}
{}_pF_q(a_1,\dots,a_p;b_1,\dots,b_q;x) = \sum_{i=0}^\infty \frac{(a_1)_i \cdots (a_p)_i}{(b_1)_i \cdots (b_q)_i} \frac{x^i}{i!},
\end{equation}
$x \in \C$, is the standard notation for the classical generalized hypergeometric series \cite{andrews}.

The convergence properties of the series \eqref{pfqarray} follow from standard criteria for convergence of the classical generalized hypergeometric series~\cite[Theorem 2.1.1]{andrews}: For $p \le q$, the series~\eqref{pfqarray} converges for all $x_{1,r_1},\dots,x_{2m,r_{2m}} \in \C$; if $p = q+1$, then the series converges whenever $|x_{1,r_1} \cdots x_{2m,r_{2m}}| < 1$; and if $p > q+1$, then the series diverges for all $x_{1,r_1} \cdots x_{2m,r_{2m}} \neq 0$ unless it is a terminating series.

Next, we apply \eqref{schurhyperdet}, {\it viz.},
$
\det\bigl(x_{k,r}^{i_s}\bigr)_{1\le r,s\le n} = V(\xx_k) s_\lambda(\xx_k)$,
to obtain
\begin{gather}
\frac{\Det({}_pF_q(a_1,\dots,a_p;b_1,\dots,b_q;x_{1,r_1} \cdots x_{2m,r_{2m}}))}{\prod_{k=1}^{2m} V(\xx_k)}\nonumber \\
\qquad= \sum_\lambda \prod_{j=1}^n \frac{(a_1)_{\lambda_j+n-j} \cdots (a_p)_{\lambda_j+n-j}}{(b_1)_{\lambda_j+n-j} \cdots (b_q)_{\lambda_j+n-j}} \cdot \frac{1}{(\lambda_j+n-j)!} \cdot \prod_{k=1}^{2m} s_\lambda(\xx_k).\label{eq_hgfhyperdet_exp}
\end{gather}
It was assumed earlier that $-b_t+j-1$ is not a nonnegative integer for all $t=1,\dots,q$ and all~${j=1,\dots,n}$; that assumption is necessary to ensure that $(b_t)_{\lambda_j+n-j} \neq 0$ for all $t=1,\dots,q$ and $j=1,\dots,n$, in which case the expansion in \eqref{eq_hgfhyperdet_exp} is well defined.

By \eqref{classicalrising} and \eqref{partitionalrising},
\begin{align*}
\prod_{j=1}^n (a)_{\lambda_j+n-j} &= \prod_{j=1}^n \frac{\Gamma(a+\lambda_j+n-j)}{\Gamma(a)} \\
&= \prod_{j=1}^n (a+n-j)_{\lambda_j} \frac{\Gamma(a+n-j)}{\Gamma(a)}
= (a+n-1)_\lambda \prod_{j=1}^n (a)_{n-j},
\end{align*}
so we obtain
\begin{gather}
 \frac{\Det({}_pF_q(a_1,\dots,a_p;b_1,\dots,b_q;x_{1,r_1} \cdots x_{2m,r_{2m}}))}{\prod_{k=1}^{2m} V(\xx_k)} \nonumber \\
\qquad
= \prod_{j=1}^n \frac{(a_1)_{n-j} \cdots (a_p)_{n-j}}{(b_1)_{n-j} \cdots (b_q)_{n-j}} \nonumber \\
\phantom{\qquad
=}{} \times \sum_\lambda \frac{(a_1+n-1)_\lambda \cdots (a_p+n-1)_\lambda}{(b_1+n-1)_\lambda \cdots (b_q+n-1)_\lambda} \cdot \prod_{j=1}^n \frac{1}{(\lambda_j+n-j)!} \cdot \prod_{k=1}^{2m} s_\lambda(\xx_k).\label{pfqexample}
\end{gather}

For $(p,q) = (0,0)$, since ${}_0F_0(x) = \exp(x)$, $x \in \C$ (see \eqref{pfqarray}), then \eqref{pfqexample} reduces to \eqref{schurhyperdetsum} and we recover Example \ref{exponentialhyperdet}.

For $(p,q) = (1,0)$, we apply the classical negative-binomial theorem, ${}_1F_0(a;x) = (1-x)^{-a}$, $|x| < 1$, and then we obtain from \eqref{pfqexample} the formula
\[
\frac{\Det((1-x_{1,r_1} \cdots x_{2m,r_{2m}})^{-a})}{\prod_{k=1}^{2m} V(\xx_k)}
= \prod_{j=1}^n (a)_{n-j} \cdot \sum_\lambda (a+n-1)_\lambda \prod_{j=1}^n \frac{1}{(\lambda_j+n-j)!} \cdot \prod_{k=1}^{2m} s_\lambda(\xx_k).
\]
For the case in which $a = 1$, this identity reduces to \cite[Corollary 3.2]{matsumoto}.
\end{Example}

We now extend Theorem \ref{thm_extd_hc} to arbitrary generalized hypergeometric functions of Hermitian matrix argument \cite{grossrichards87,grossrichards89}. Let $a_1,\dots,a_p$ and $b_1,\dots,b_q$ be complex numbers such that $-b_t+j-1$ is not a nonnegative integer for all $t=1,\dots,q$ and $j=1,\dots,n$. The generalized hypergeometric function of an $n \times n$ Hermitian matrix argument $X$ is defined by the zonal polynomial series
\begin{equation}
\label{matrixargpfq}
{}_pF_q(a_1,\dots,a_p;b_1,\dots,b_q;X) = \sum_{l=0}^\infty \frac{1}{l!} \sum_{|\lambda|=l} \frac{(a_1)_\lambda\cdots (a_p)_\lambda}{(b_1)_\lambda \cdots (b_q)_\lambda} Z_\lambda(X),
\end{equation}
where the inner sum is over all partitions $\lambda$ of all nonnegative integers such that $\lambda$ is of length~${\ell(\lambda) \le n}$ and weight $|\lambda|=l$.

For the case in which $n = 1$, the series \eqref{matrixargpfq} reduces to the classical, scalar-argument case in \eqref{classicalargpfq}. Although the notation ${}_pF_q$ is used in both the scalar and matrix argument cases, its usage will always be clear from the context.

The convergence properties of the series \eqref{matrixargpfq}, established in \cite[Section 6]{grossrichards87} in terms of $\|X\|$, the spectral norm of $X$, are as follows: For $p \le q$, the series \eqref{matrixargpfq} converges for all $\|X\| < \infty$; if~${p = q+1}$, then the series converges whenever $\|X\| < 1$; and if $p > q+1$, then the series diverges unless it is terminating.

If $(p,q) = (0,0)$, then by \eqref{eq_tracepowers}
\begin{equation}
\label{eq_0F0}
{}_0F_0(X) = \sum_{l=0}^\infty \frac{1}{l!} \sum_{|\lambda|=l} Z_\lambda(X) = \sum_{l=0}^\infty \frac{1}{l!} (\tr X)^l = \exp(\tr X).
\end{equation}
For $(p,q) = (1,0)$, $a \in \C$, and $\|X\| < 1$, it is well known (see, e.g., \cite{grossrichards87,grossrichards89}) that
\begin{equation}
\label{eq_1F0}
{}_1F_0(a;X) = \det(I_n-X)^{-a},
\end{equation}
where $I_n$ denotes the $n \times n$ identity matrix.

The following result extends Theorem \ref{thm_extd_hc} to any generalized hypergeometric functions of matrix argument. The proof of this result is obtained by expanding the matrix argument ${}_pF_q$ function and applying the same integration and simplification procedures used in \eqref{eq_exp_tr_expan}--\eqref{hcprelim} in the course of proving Theorem \ref{thm_extd_hc}, together with the iterative integration argument used to derive \eqref{eq_iterated_mvp}.

\begin{Theorem}
\label{ghf-extendedpfq}
For $k=1,\dots,2m$, let $X_k$ be an $n \times n$ Hermitian matrix with eigenvalues $x_{k,1},\dots,x_{k,n}$. Then
\begin{gather}
\int_{U(n)} \cdots \int_{U(n)} {}_pF_q\left(a_1,\dots,a_p;b_1,\dots,b_q;\prod_{k=1}^{2m} U_k X_k U_k^{-1}\right) \prod_{k=1}^{2m} \dd U_k \nonumber\\
\qquad= \beta_n \sum_\lambda \frac{(a_1)_\lambda \cdots (a_p)_\lambda}{(b_1)_\lambda \cdots (b_q)_\lambda} \frac{1}{\prod_{j=1}^n (\lambda_j + n-j)!} \frac{1}{(s_\lambda(I_n))^{2m-2}} \prod_{k=1}^{2m} s_\lambda(X_k).\label{hcextendedpfq}
\end{gather}
\end{Theorem}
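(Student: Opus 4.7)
The plan is to follow the paragraph of remarks preceding the statement and essentially imitate the proof of Theorem~\ref{thm_extd_hc}, with the exponential series replaced by the zonal polynomial expansion \eqref{matrixargpfq} of ${}_pF_q$. Since the integrand ${}_pF_q\bigl(a_1,\dots,a_p;b_1,\dots,b_q;\prod_{k=1}^{2m} U_k X_k U_k^{-1}\bigr)$ is absolutely convergent on the compact product $U(n)^{2m}$ (by the convergence discussion following \eqref{matrixargpfq} applied uniformly to the eigenvalues of the product, which are bounded in terms of $\|X_1\|\cdots\|X_{2m}\|$), we may interchange sum and integral and write
\[
\int_{U(n)^{2m}} {}_pF_q\!\left(\!a_1,\dots,a_p;b_1,\dots,b_q;\prod_{k=1}^{2m} U_k X_k U_k^{-1}\!\right) \prod_{k=1}^{2m}\dd U_k
= \sum_\lambda \frac{(a_1)_\lambda\cdots(a_p)_\lambda}{(b_1)_\lambda\cdots(b_q)_\lambda}\frac{1}{|\lambda|!} I_\lambda,
\]
where $I_\lambda := \int_{U(n)^{2m}} Z_\lambda\!\bigl(\prod_{k=1}^{2m} U_kX_kU_k^{-1}\bigr) \prod_{k=1}^{2m}\dd U_k$.

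Next I would evaluate $I_\lambda$ by iterating the mean-value property \eqref{meanvalue}, exactly as in the derivation of \eqref{eq_iterated_mvp} but with $2m$ factors instead of two. Carrying out the $U_{2m}$-integral first with $X_1' := \prod_{k=1}^{2m-1}U_kX_kU_k^{-1}$ playing the role of the first Hermitian argument, \eqref{meanvalue} gives a factor $Z_\lambda(X_{2m})/Z_\lambda(I_n)$ times $Z_\lambda(X_1')$; iterating (and using unitary invariance of $Z_\lambda$ at the final step to dispose of $U_1$) yields
\[
I_\lambda = Z_\lambda(X_1) \prod_{k=2}^{2m} \frac{Z_\lambda(X_k)}{Z_\lambda(I_n)}.
\]
This is the one place where it is important to verify that the inductive application of \eqref{meanvalue} is legitimate; the hard point is simply to recognize that after each single integration we are again left with a product of unitarily conjugated Hermitian matrices, so \eqref{meanvalue} continues to apply, and the normalization of Haar measure disposes of the final factor cleanly.

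Substituting this expression for $I_\lambda$ gives the summation formula in terms of zonal polynomials, exactly parallel to \eqref{hcprelim}. To convert to Schur functions, I would use $Z_\lambda = \omega_\lambda s_\lambda$ from \eqref{zonalschur}, so that the $2m$ zonal factors contribute $\omega_\lambda^{2m}$ and the denominator contributes $(\omega_\lambda s_\lambda(I_n))^{2m-2}$; together with the $1/|\lambda|!$ prefactor this yields
\[
\frac{\omega_\lambda}{|\lambda|!}\frac{1}{(s_\lambda(I_n))^{2m-2}}\prod_{k=1}^{2m} s_\lambda(X_k).
\]
Applying \eqref{omegadlambda} in the form $\omega_\lambda/|\lambda|! = d_\lambda/(n)_\lambda = s_\lambda(I_n)/(n)_\lambda$ cancels one more power of $s_\lambda(I_n)$ in the denominator (accounting for the exponent $2m-2$ rather than $2m-1$), and finally the identity $(n)_\lambda = \beta_n^{-1}\prod_{j=1}^n(\lambda_j+n-j)!$ established at the end of the proof of Theorem~\ref{thm_extd_hc} produces the factor $\beta_n/\prod_j(\lambda_j+n-j)!$, giving precisely \eqref{hcextendedpfq}. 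The main obstacle is bookkeeping: keeping track of how many powers of $s_\lambda(I_n)$ and $\omega_\lambda$ appear so that the final exponent $2m-2$ emerges correctly; the analytic content is entirely the iterated mean-value identity.
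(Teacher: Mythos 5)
Your proposal is correct and follows exactly the route the paper indicates for Theorem~\ref{ghf-extendedpfq}: expand ${}_pF_q$ via the zonal polynomial series \eqref{matrixargpfq}, integrate term by term using the iterated mean-value property as in \eqref{extendedmeanvalue}, and convert to Schur functions via \eqref{zonalschur} and \eqref{omegadlambda}, precisely the procedure of \eqref{eq_exp_tr_expan}--\eqref{hcprelimsum}. The only blemish is a bookkeeping slip in your intermediate display, where the power of $s_\lambda(I_n)$ in the denominator should be $2m-1$ before the cancellation coming from $\omega_\lambda/|\lambda|! = s_\lambda(I_n)/(n)_\lambda$ reduces it to $2m-2$; your surrounding prose and the final formula are nonetheless correct.
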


On setting $(p,q)=(0,0)$ in \eqref{hcextendedpfq} and applying \eqref{eq_0F0}, we recover Theorem \ref{thm_extd_hc}. Further, if~${(p,q)=(1,0)}$, then by applying \eqref{eq_1F0} we obtain
\begin{gather*}
\int_{U(n)} \cdots \int_{U(n)} \det\left(I_n - \prod_{k=1}^{2m} U_k X_k U_k^{-1}\right)^{-a} \prod_{k=1}^{2m} \dd U_k \\
\qquad= \beta_n \sum_\lambda (a)_\lambda \frac{1}{\prod_{j=1}^n (\lambda_j + n-j)!} \frac{1}{(s_\lambda(I_n))^{2m-2}} \prod_{k=1}^{2m} s_\lambda(X_k),
\end{gather*}
with convergence for $\max\{\|X_1\|,\dots\|X_{2m}\|\} < 1$.

\section{Hyperdeterminantal total positivity}\label{sec_HTP}

In this section, we define the concept hyperdeterminantal total positivity, construct several examples of such kernels, and generalize the classical basic composition formula. Further, we apply the hyperdeterminantal Binet--Cauchy formula and the generalized basic composition formula to show how examples of HTP kernels can be constructed.

\subsection{Definition of hyperdeterminantal total positivity}

We now define the concept of hyperdeterminantal total positivity of general order $d$ for a kernel~${K\colon \R^{2m} \to [0,\infty)}$. As in Section \ref{sec_BC_for_hyperdets}, we use the notation
\[
\CC_n = \{(x_1,\dots,x_n) \in \R^n\mid x_1 > \cdots > x_n\}
\]
for the fundamental Weyl chamber in $\R^n$.

\begin{Definition}
\label{htpdefinition}
A kernel $K\colon \R^{2m} \to [0,\infty)$ is {\it hyperdeterminantal totally positive of order} $d$ (HTP$_d$) if, for any collection of vectors $\xx_k = (x_{k,1},\dots,x_{k,n}) \in \CC_n$, $1 \le k \le 2m$, we have
\begin{equation}
\label{htphyperdet}
\Det(K(x_{1,r_1},x_{2,r_2},\dots,x_{2m,r_{2m}}))_{1 \le r_1,\dots,r_{2m} \le n} \ge 0
\end{equation}
for all $n = 1,\dots,d$. If the hyperdeterminants \eqref{htphyperdet} are nonnegative for all $n \in \N$, then we say that $K$ is {\it hyperdeterminantal totally positive of order} $\infty$ (HTP$_\infty$).

If the hyperdeterminants \eqref{htphyperdet} are positive for all $n = 1,\dots,d$, then we say that $K$ is {\it hyperdeterminantal strictly totally positive of order} $d$ (HSTP$_d$). Similarly, if strict inequality holds for all $n \in \N$, then we say that $K$ is {\it hyperdeterminantal strictly totally positive of order infinity} (HSTP$_\infty$).
\end{Definition}

We remark that Definition \ref{htpdefinition} was given in \cite{Johnson_Richards} and, independently, in \cite{wanglixu}. Intuitively, a~kernel $K\colon\R^{2m} \to [0,\infty)$ is HTP$_d$ if the hyperdeterminant of every sub-array of the array
\begin{equation}
\label{eq_array}
(K(x_{1,r_1},x_{2,r_2},\dots,x_{2m,r_{2m}}))_{1 \le r_1,\dots,r_{2m} \le d}
\end{equation}
is nonnegative for all vectors $\xx_k = (x_{k,1},\dots,x_{k,n}) \in \CC_d$, $k=1,\dots,2m$. Note that the concept of a {\it minor} of an array was treated in \cite{boussicault} and, when expressed in that terminology, a~kernel~${K\colon\R^{2m} \to [0,\infty)}$ is HTP$_d$ if all minors of the array \eqref{eq_array} are nonnegative for all vectors~${\xx_1,\dots,\xx_{2m} \in \CC_d}$.

\subsection{Examples of HTP kernels}

\begin{Example}
\label{classicaltp}
The case $m = 1$: By the definition in \eqref{hyperdet} of the hyperdeterminant, a kernel~${K\colon \R^2 \to [0,\infty)}$ is HTP$_d$ if, for all $n = 1,\dots,d$, the determinant
\[
\det(K(x_{1,r_1},x_{2,r_2}))_{1 \le r_1,r_2 \le n} =
\begin{vmatrix}
K(x_{1,1},x_{2,1}) & K(x_{1,1},x_{2,2}) & \cdots & K(x_{1,1},x_{2,n}) \\
K(x_{1,2},x_{2,1}) & K(x_{1,2},x_{2,2}) & \cdots & K(x_{1,2},x_{2,n}) \\
\vdots & \vdots & & \vdots \\
K(x_{1,n},x_{2,1}) & K(x_{1,n},x_{2,2}) & \cdots & K(x_{1,n},x_{2,n})
\end{vmatrix}
\]
is nonnegative whenever $x_{1,1} > x_{1,2} > \cdots > x_{1,d}$ and $x_{2,1} > x_{2,2} > \cdots > x_{2,d}$. This condition is well known to be the classical definition of total positivity of order $d$.
\end{Example}

For $m \ge 2$, the concept of HTP$_d$ appears to be new even for $d=2$, and the following example illustrates the difference between HTP and the concept of multivariate total positivity developed in \cite{karlinrinott80}.

\begin{Example}
\label{classicalhtp}
The case $d = m = 2$: For vectors $\xx_k = (x_{k,1},x_{k,2}) \in \CC_2$, $k = 1,\dots,4$, we apply~\eqref{recurrencetwobytwo} to obtain
\begin{gather}
\Det(K(x_{1,r_1},\dots, x_{4,r_4}))_{1 \le r_1,\dots,r_4 \le 2} \nonumber \\
\qquad=
\begin{vmatrix}
K(x_{1,1},x_{2,1},x_{3,1},x_{4,1}) & K(x_{1,1},x_{2,1},x_{3,1},x_{4,2}) \\
K(x_{1,2},x_{2,2},x_{3,2},x_{4,1}) & K(x_{1,2},x_{2,2},x_{3,2},x_{4,2})
\end{vmatrix} \nonumber \\
\phantom{\qquad= }{} -
\begin{vmatrix}
K(x_{1,1},x_{2,2},x_{3,1},x_{4,1}) & K(x_{1,1},x_{2,2},x_{3,1},x_{4,2}) \\
K(x_{1,2},x_{2,1},x_{3,2},x_{4,1}) & K(x_{1,2},x_{2,1},x_{3,2},x_{4,2})
\end{vmatrix} \nonumber \\
\phantom{\qquad= }{} -
\begin{vmatrix}
K(x_{1,2},x_{2,1},x_{3,1},x_{4,1}) & K(x_{1,2},x_{2,1},x_{3,1},x_{4,2}) \\
K(x_{1,1},x_{2,2},x_{3,2},x_{4,1}) & K(x_{1,1},x_{2,2},x_{3,2},x_{4,2})
\end{vmatrix} \nonumber \\
\phantom{\qquad= }{} +
\begin{vmatrix}
K(x_{1,2},x_{2,2},x_{3,1},x_{4,1}) & K(x_{1,2},x_{2,2},x_{3,1},x_{4,2}) \\
K(x_{1,1},x_{2,1},x_{3,2},x_{4,1}) & K(x_{1,1},x_{2,1},x_{3,2},x_{4,2})
\end{vmatrix}.\label{eq_hyperdet_22a}
\end{gather}
Define the vectors
\[
\uu_1 = (x_{1,1},x_{2,2}), \qquad \uu_2 = (x_{1,2},x_{2,1}), \qquad
\vv_1 = (x_{3,1},x_{4,2}), \qquad \vv_2 = (x_{3,2},x_{4,1}).
\]
Since each $\xx_k \in \CC_2$, $k=1,\dots,4$, then
\begin{gather*}
\uu_1 \vee \uu_2 = (x_{1,1},x_{2,1}), \qquad \uu_1 \wedge \uu_2 = (x_{1,2},x_{2,2}), \\
\vv_1 \vee \vv_2 = (x_{3,1},x_{4,1}), \qquad \vv_1 \wedge \vv_2 = (x_{3,2},x_{4,2}).
\end{gather*}
Therefore, \eqref{eq_hyperdet_22a} becomes
\begin{gather}
\Det(K(x_{1,r_1},\dots, x_{4,r_4}))_{1 \le r_1,\dots,r_4 \le 2} \nonumber \\
\qquad=
\begin{vmatrix}
K(\uu_1 \vee \uu_2,\vv_1 \vee \vv_2) & K(\uu_1 \vee \uu_2,\vv_1) \\
K(\uu_1 \wedge \uu_2,\vv_2) & K(\uu_1 \wedge \uu_2,\vv_1 \wedge \vv_2)
\end{vmatrix} \nonumber \\
\phantom{\qquad= }{} - \begin{vmatrix}
K(\uu_1,\vv_1 \vee \vv_2) & K(\uu_1,\vv_1) \\
K(\uu_2,\vv_2) & K(\uu_2,\vv_1 \wedge \vv_2)
\end{vmatrix} \nonumber \\
\phantom{\qquad= }{} - \begin{vmatrix}
K(\uu_2,\vv_1 \vee \vv_2) & K(\uu_2,\vv_1) \\
K(\uu_1,\vv_2) & K(\uu_1,\vv_1 \wedge \vv_2)
\end{vmatrix} \nonumber \\
\phantom{\qquad= }{} + \begin{vmatrix}
K(\uu_1 \wedge \uu_2,\vv_1 \vee \vv_2) & K(\uu_1 \wedge \uu_2,\vv_1) \\
K(\uu_1 \vee \uu_2,\vv_2) & K(\uu_1 \vee \uu_2,\vv_1 \wedge \vv_2)
\end{vmatrix} .\label{eq_hyperdet_22b}
\end{gather}
Let $\ww_1 = (\uu_1 \vee \uu_2,\vv_1)$, $\ww_2 = (\uu_1 \wedge \uu_2,\vv_2)$. Then, $\ww_1 \vee \ww_2 = (\uu_1 \vee \uu_2,\vv_1 \vee \vv_2)$, $\ww_1 \wedge \ww_2 = (\uu_1 \wedge \uu_2,\vv_1 \wedge \vv_2)$, and the first determinant on the right-hand side of \eqref{eq_hyperdet_22b} equals
\begin{equation}
\label{eq_MTP_det1}
\begin{vmatrix}
K(\ww_1 \vee \ww_2) & K(\ww_1) \\
K(\ww_2) & K(\ww_1 \wedge \ww_2)
\end{vmatrix}.
\end{equation}
Therefore, if $K$ is multivariate TP$_2$ in the sense defined in~\cite{karlinrinott80} then it follows that the determinant \eqref{eq_MTP_det1} is nonnegative. However, for arbitrary multivariate TP$_2$ kernels, it appears to be difficult to derive general criteria that imply the nonnegativity of the alternating sum of all four determinants in~\eqref{eq_hyperdet_22b}.
\end{Example}

Turning to the construction of HTP$_d$ kernels, we remarked in the introduction that the fundamental example of a classical STP$_\infty$ kernel is the function $K(x_1,x_2) = \exp(x_1 x_2)$, $(x_1,x_2) \in \R^2$. We now generalize this example to the setting of hyperdeterminantal total positivity.

Define a {\it slice of the array $A = (A(r_1,\dots,r_{2m}))_{1 \le r_1,\dots,r_{2m} \le n}$ in the $j$th direction} to be the subset of all indices $(r_1,\dots,r_{2m})$ where the $j$th index, $r_j$, is fixed. Gelfand, \textit{et al.} \cite[Corollary~1.5\,(b)]{GKZ1} used the action of certain general linear groups to show that $\Det(A)$ is a homogeneous polynomial in the entries of each slice; this homogeneity property can also be deduced directly, albeit with more work, from the expansions \eqref{recurrencetwobytwo} and \eqref{recurrence_hyperdet}.

\begin{Proposition}
\label{prop_exponentialexample}
The following kernels are HSTP$_\infty$:
\begin{itemize}\itemsep=0pt
\item[$(i)$] $K_1(x_1,\dots,x_{2m}) = \exp(x_1 \cdots x_{2m})$, $x_1,\dots,x_{2m} \in \R$.

\item[$(ii)$] $K_2(x_1,x_2,\dots,x_{2m}) = x_1^{x_2 x_3 \cdots x_{2m}}$, $x_1,x_2,\dots,x_{2m} > 0$.
\end{itemize}
\end{Proposition}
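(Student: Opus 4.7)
The plan is to begin from the Schur function summation formula \eqref{schurhyperdetsum} of Example~\ref{exponentialhyperdet}, which recasts the hyperdeterminant as
\[
\Det\bigl(\exp(x_{1,r_1} \cdots x_{2m,r_{2m}})\bigr) = \left(\prod_{k=1}^{2m} V(\xx_k)\right) \sum_\lambda \prod_{j=1}^n \frac{1}{(\lambda_j+n-j)!} \prod_{k=1}^{2m} s_\lambda(\xx_k).
\]
For $\xx_k \in \CC_n$ every Vandermonde factor $V(\xx_k)$ is strictly positive, so the prefactor is positive and strict positivity of the hyperdeterminant is equivalent to strict positivity of the Schur series. When every $\xx_k$ has all positive entries, each $s_\lambda(\xx_k)$ is a sum of monomials with positive integer coefficients and so is strictly positive; the series is then bounded below by its $\lambda = (0,\dots,0)$ term, which equals $1$, and part~(i) is immediate in this sub-case. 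For general $\xx_k \in \CC_n$ with possibly negative entries, I would bootstrap from the positive-orthant case by combining the Binet--Cauchy integral \eqref{binetcauchy2} with the Harish-Chandra-type identity of Theorem~\ref{thm_extd_hc} or the hyperdeterminantal basic composition formula developed later in Section~\ref{sec_HTP}, so as to realize the Schur series as an iterated integral with a manifestly nonnegative integrand, thereby circumventing the fact that individual Schur function values may be of indefinite sign.

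Part~(ii) reduces at once to part~(i) via the substitution $y_{1,r} := \log x_{1,r}$, which is valid because $x_{1,r} > 0$. Indeed,
\[
K_2(x_1,x_2,\dots,x_{2m}) = \exp\bigl((\log x_1)\, x_2 \cdots x_{2m}\bigr) = K_1(y_1, x_2, \dots, x_{2m}),
\]
and since $\log$ is strictly increasing on $(0,\infty)$, the chain $x_{1,1} > \cdots > x_{1,n} > 0$ is equivalent to $y_{1,1} > \cdots > y_{1,n}$. Hence the hyperdeterminant for $K_2$ at any admissible configuration equals the hyperdeterminant for $K_1$ at the transformed data, and the HSTP$_\infty$ property transfers without further work.

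The principal obstacle is the mixed-signs case in step one: for $\xx_k$ with some negative entries, the products $\prod_k s_\lambda(\xx_k)$ need not be nonnegative, so positivity of the Schur series rests on nontrivial cancellation. For $m = 1$ this difficulty is resolved at a stroke by the classical Harish-Chandra integral \eqref{hcintegraldet}, whose integrand $\exp(\tr\, U X_1 U^{-1} X_2)$ is automatically real and strictly positive. For $m \ge 2$, however, the corresponding integrand $\exp\bigl(\tr\prod_k U_k X_k U_k^{-1}\bigr)$ appearing in Theorem~\ref{thm_extd_hc} need not be real-valued, so the $m=1$ argument does not carry over verbatim; the cleanest replacement I can foresee is a reduction via the generalized basic composition formula of Section~\ref{sec_HTP} that rewrites $K_1$ as a composition of kernels whose HSTP$_\infty$ property has already been established.
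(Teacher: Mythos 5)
Your treatment of the nonnegative-orthant case of part (i) and your reduction of part (ii) to part (i) via $x_1 \mapsto \log x_1$ both coincide with the paper's argument. The genuine gap is exactly where you flag it: the case in which some $\xx_k$ has negative entries is left open, and neither of the routes you sketch will close it. The Harish-Chandra-type integral of Theorem~\ref{thm_extd_hc} has, as you yourself observe, a complex-valued integrand once $m \ge 2$, so it yields no positivity. The Binet--Cauchy/basic-composition route fares no better: formula \eqref{expbinetcauchy} expresses the hyperdeterminant as $\sum_{i_1 > \cdots > i_n \ge 0} \frac{1}{i_1! \cdots i_n!} \prod_{k=1}^{2m} \det\bigl(x_{k,r}^{i_s}\bigr)$, and when some $x_{k,r}$ are negative the individual determinants $\det\bigl(x_{k,r}^{i_s}\bigr)$ need not be nonnegative, so the summand is not ``manifestly nonnegative''; moreover Proposition~\ref{prop_bcf} delivers only nonnegativity (HTP$_d$), not the strict positivity required for HSTP$_\infty$.

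The idea the paper uses, and which is absent from your proposal, is a translation trick generalizing the classical row/column rescaling of $\det(\exp(x_iy_j))$. For each $k$ with $x_{k,n} < 0$, one multiplies the entries of the array by the exponential factors $\exp(-x_{1,i_1}\cdots x_{k-1,i_{k-1}}\,x_{k,n}\,x_{k+1,i_{k+1}}\cdots x_{2m,i_{2m}})$; this has the effect of replacing $x_{k,r}$ by $y_{k,r} = x_{k,r} - x_{k,n} \ge 0$ while preserving membership in $\CC_n$, so the already-established orthant case gives $\Det(\exp(y_{1,r_1}\cdots y_{2m,r_{2m}})) > 0$. The paper then invokes the homogeneity of $\Det$ in the entries of each slice (Gelfand--Kapranov--Zelevinsky) to conclude that the original and modified hyperdeterminants differ only by strictly positive exponential factors, whence $\Det(\exp(x_{1,r_1}\cdots x_{2m,r_{2m}})) > 0$. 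Without this step (or some substitute for it), your argument establishes the HSTP$_\infty$ property only for data in $\CC_n \cap [0,\infty)^n$, not on all of $\R^{2m}$ as the proposition claims.
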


\begin{proof}
(i) Let $\xx_k = (x_{k,1},\dots,x_{k,n})$, $k=1,\dots,2m$, and suppose that each $\xx_k \in [0,\infty)^n$, the nonnegative orthant in $\R^n$. By
\eqref{schurhyperdetsum},
\begin{equation}
\Det(\exp(x_{1,r_1} x_{2,r_2} \cdots x_{2m,r_{2m}})) = \left(\prod_{k=1}^{2m} V(\xx_k)\right) \cdot \sum_\lambda \prod_{j=1}^n \frac{1}{(\lambda_j + n-j)!} \cdot \prod_{k=1}^{2m} s_\lambda(\xx_k),\label{expsummation}
\end{equation}
where the sum is over all partitions $\lambda = (\lambda_1,\dots,\lambda_n)$ of length $\ell(\lambda) \le n$.
It is a well-known result~\cite{grossrichards89,macdonald} that $s_\lambda(\xx_k) \ge 0$ for all $\xx_k \in [0,\infty)^n$, and therefore each summand on the right-hand side of \eqref{expsummation} is nonnegative. Moreover, the sum is positive since $s_{(0)}(\xx_k) \equiv 1$, where $(0)$ denotes the zero partition.

Suppose also that each $\xx_k \in \CC_n$. Then, by \eqref{eq_vandermonde}, the Vandermonde polynomials $V(\xx_k)$ on the right-hand side of \eqref{expsummation} are positive. Therefore, the right-hand side of \eqref{expsummation} is positive for~${\xx_k \in \CC_n \cap [0,\infty)^n}$, $k=1,\dots,2m$, so we obtain
\[
\Det(\exp(x_{1,r_1} x_{2,r_2} \cdots x_{2m,r_{2m}})) > 0.
\]
This proves that the kernel $K_1$ is SHTP$_n$, and since $n$ was chosen arbitrarily then it follows that~$K_1$ is HSTP$_\infty$.

Now consider the case in which $x_{k,n} < 0$ for some $k$. For each such $k$, we multiply the corresponding slice of the array, {\it viz.},
\[
\{(\exp(x_{1,r_1} x_{2,r_2} \cdots x_{2m,r_{2m}})) \mid 1 \le i_1,\dots,i_{2m} \le n,\, i_k = n\}
\]
by exponential factors as follows: Form the vectors $x_{k,n}x_i = (x_{k,n}x_{k,1},\dots,x_{k,n}x_{in})$, $1 \le i \le n$; next, multiply the corresponding slice by
\[
\exp(-x_{1,i_1}x_{2,i_2} \cdots x_{k-1,i_{k-1}}x_{k,n}x_{k+1,i_{k+1}}\cdots x_{2m,i_{2m}}).
\]
thereby changing the generic entries of that slice to
\[
\exp(-x_{1,i_1}x_{2,i_2} \cdots x_{k-1,i_{k-1}}(x_{k,i_k}-x_{k,n})x_{k+1,i_{k+1}}\cdots x_{2m,i_{2m}}).
\]
On carrying out these multiplications for each $k$ such that $x_{k,n} < 0$, the outcome is that we have created a new multidimensional array, $(\exp(y_{1,r_1} y_{2,r_2} \cdots y_{2m,r_{2m}}))$, where
\[
y_{k,r_k} =
\begin{cases}
x_{k,r_k} & \hbox{if } x_{k,n} \ge 0, \\
x_{k,r_k} - x_{k,n} & \hbox{if } x_{k,n} < 0.
\end{cases}
\]
Noting that $y_{k,r_k} \ge 0$ for all $k=1,\dots,2m$, it then follows from the preceding argument that~${\Det(\exp(y_{1,r_1} y_{2,r_2} \cdots y_{2m,r_{2m}})) > 0}$.

As noted earlier, the hyperdeterminant is a homogeneous polynomial in the entries of each slice. Hence $\Det(\exp(y_{1,r_1} y_{2,r_2} \cdots y_{2m,r_{2m}}))$ and $\Det(\exp(x_{1,r_1} x_{2,r_2} \cdots x_{2m,r_{2m}}))$ differ only by powers of the multiplying factors. Since those factors are all exponential terms then the hyperdeterminants $\Det(\exp(y_{1,r_1} y_{2,r_2} \cdots y_{2m,r_{2m}}))$ and $\Det\bigl((x_{1,r_1} x_{2,r_2} \cdots x_{2m,r_{2m}}))$ differ only by positive factors, and therefore $\Det(\exp(x_{1,r_1} x_{2,r_2} \cdots x_{2m,r_{2m}})) > 0$.

(ii) Suppose that $x_1,x_2,\dots,x_{2m} > 0$. We now make the change-of-variables $x_1 \to \log x_1$ in the kernel in (i). Then the resulting kernel is $K_2(x_1,x_2,\dots,x_{2m}) = x_1^{x_2 x_3 \cdots x_{2m}}$, and it follows from the HDTP$_\infty$ property of $K_1$ and strictly increasing nature of the mapping $x_1 \to \log x_1$ that $K_2$ also is HDTP$_\infty$.
\end{proof}

\begin{Remark}
For the case in which some of the $x_j$ are integer variables, a kernel similar to $K_1$ was studied in \cite{wanglixu,xu}.

Also, in Proposition~\ref{prop_exponentialexample}, if $x_1,x_2,\dots,x_{2m} > 0$ and we make the changes-of-variables $x_j \to \exp(x_j)$, $j=1,\dots,2m$, then the resulting kernel is $K_3(x_1,\dots,x_{2m}) = \exp\bigl(e^{x_1 + \cdots + x_{2m}}\bigr)$, where $x_1,x_2,\dots,x_{2m} \in \R$. As $K_3$ depends on $x_1 + \cdots + x_{2m}$ only, then it generalizes the P\'olya frequency function type of TP kernels. See \cite{luquethibon,wanglixu,xu} for kernels of this type.
\end{Remark}

\begin{Example}
Consider the classical case in which $m = 1$. We want to show that if $x_{1,1} > \cdots > x_{1,n}$ and $x_{2,1} > \cdots > x_{2,n}$, then the determinant
\[
\begin{vmatrix}
\exp(x_{1,1}x_{2,1}) & \exp(x_{1,1}x_{2,2}) & \cdots & \exp(x_{1,1}x_{2,n}) \\
\vdots & \vdots & & \vdots \\
\exp(x_{1,n}x_{2,1}) & \exp(x_{1,n}x_{2,2}) & \cdots & \exp(x_{1,n}x_{2,n})
\end{vmatrix}
\]
is positive.

If $x_{1,n} \ge 0$ and $x_{2,n} \ge 0$, then the strict positivity of the determinant follows from \eqref{schurhyperdetsum} with~${m=1}$.

For $x_{2,n} < 0 < x_{1,n}$, we multiply the $j$th row of the determinant by $\exp(-x_{1,j}x_{2,n})$, $j=1,\dots,n$. This converts the determinant to
\[
\begin{vmatrix}
\exp(x_{1,1}(x_{2,1}-x_{2,n})) & \exp(x_{1,1}(x_{2,2}-x_{2,n})) & \cdots & \exp(x_{1,1}(x_{2,n}-x_{2,n})) \\
\vdots & \vdots & & \vdots \\
\exp(x_{1,n}(x_{2,1}-x_{2,n})) & \exp(x_{1,n}(x_{2,2}-x_{2,n})) & \cdots & \exp(x_{1,n}(x_{2,n}-x_{2,n}))
\end{vmatrix}.
\]
On the other hand, if $x_{1,n} < 0 < x_{2,n}$, then we multiply the $j$th column by $\exp(x_{1,n}x_{2,j})$, $j=1,\dots,n$; and if $x_{1,n} < 0$ and $x_{2,n} < 0$, then we carry out both multiplications.

The outcome of these multiplications is to transform the original determinant to a new determinant of the same type, with generic entries $\exp(y_{1,i}y_{2,j})$ where $y_{1,1} > y_{1,2} > \cdots > y_{1,n} \ge 0$ and $y_{2,1} > y_{2,2} > \cdots > y_{2,n} \ge 0$, and we have seen before that the resulting determinant is positive. Moreover, the two determinants differ in value only by positive (exponential) factors, therefore the initial determinant is positive.
\end{Example}

For nonnegative integers $p$ and $q$, denote by $\mcalD_{p,q}$ the set of all $(x_1,\dots,x_{2m}) \in \R^{2m}$ such that the generalized hypergeometric series ${}_pF_q(a_1,\dots,a_p;b_1,\dots,b_q;x_1\cdots x_{2m})$ converges. The next result is motivated by Example \ref{hgfhyperdet}, where the hyperdeterminant was constructed using this generalized hypergeometric series, and it extends the previous example involving the exponential function.

\begin{Proposition}
\label{prop_hgfexample}
Let $a_1,\dots,a_p \ge 0$ and $b_1,\dots,b_q > 0$. The kernel $K\colon [0,\infty)^{2m} \to \R$ such that
\[
K(x_1,\dots,x_{2m}) = {}_pF_q(a_1,\dots,a_p;b_1,\dots,b_q;x_1\cdots x_{2m})
\]
is HTP$_\infty$ on the region $\mcalD_{p,q} \cap [0,\infty)^{2m}$.
\end{Proposition}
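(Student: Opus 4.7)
The plan is to read off the conclusion from the Schur-function expansion~\eqref{pfqexample} derived in Example~\ref{hgfhyperdet}, and verify that every factor on its right-hand side is nonnegative under the present hypotheses $a_i \ge 0$, $b_t > 0$, and $\xx_k \in \CC_n \cap [0,\infty)^n$. Fix $n \in \N$ and pick such vectors $\xx_k$ with each index tuple $(x_{1,r_1},\dots,x_{2m,r_{2m}})$ lying in $\mcalD_{p,q}$, so that every entry of the array $(K(x_{1,r_1},\dots,x_{2m,r_{2m}}))$ is a convergent ${}_pF_q$ series. The Binet--Cauchy reduction used in Example~\ref{hgfhyperdet} converges absolutely in this regime, so identity~\eqref{pfqexample} is available term by term.

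The verification then reduces to a sign check, factor by factor on the right side of~\eqref{pfqexample}. The Vandermonde products $V(\xx_k) = \prod_{r<s}(x_{k,r} - x_{k,s})$ are strictly positive because each $\xx_k \in \CC_n$ has strictly decreasing coordinates. The outer prefactor $\prod_{j=1}^n (a_1)_{n-j}\cdots(a_p)_{n-j} / \prod_{j=1}^n (b_1)_{n-j}\cdots(b_q)_{n-j}$ is nonnegative: $a_i \ge 0$ forces $(a_i)_{n-j} = \prod_{l=0}^{n-j-1}(a_i + l) \ge 0$, while $b_t > 0$ forces $(b_t)_{n-j} > 0$. Using the factorization $(a+n-1)_\lambda = \prod_{j=1}^n (a+n-j)_{\lambda_j}$ from~\eqref{partitionalrising}, the same reasoning yields $(a_i+n-1)_\lambda \ge 0$, since each inner Pochhammer is a product of nonnegative reals (as $a_i + n - j \ge 0$ for $j \le n$), and correspondingly $(b_t + n - 1)_\lambda > 0$. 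The factorials $(\lambda_j + n - j)!$ are positive, and the Schur functions $s_\lambda(\xx_k)$ are nonnegative on $[0,\infty)^n$ by the standard Schur-positivity already invoked in the proof of Proposition~\ref{prop_exponentialexample}(i).

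Assembling these observations, every summand of the series in~\eqref{pfqexample} is nonnegative, the leading Vandermonde factor $\prod_{k=1}^{2m} V(\xx_k)$ is positive, and the overall outer prefactor is nonnegative; hence
\[
\Det(K(x_{1,r_1},\dots,x_{2m,r_{2m}}))_{1 \le r_1,\dots,r_{2m} \le n} \ge 0.
\]
Since $n$ was arbitrary, $K$ is HTP$_\infty$ on $\mcalD_{p,q} \cap [0,\infty)^{2m}$.

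The main obstacle, such as it is, concerns the restriction to the nonnegative orthant. In the exponential case $(p,q) = (0,0)$ of Proposition~\ref{prop_exponentialexample}(i), negative coordinates were handled by multiplying individual slices of the array by tailored exponential factors and invoking slice-wise homogeneity of $\Det$, but for a general ${}_pF_q$ kernel there is no analogous separable correction. Consequently Schur-positivity is the only lever available here, and it requires nonnegative arguments -- which is precisely why the statement restricts to $\mcalD_{p,q} \cap [0,\infty)^{2m}$. Whether an alternative mechanism (for instance an integral representation of ${}_pF_q$ of the type used in the theory of hypergeometric functions of matrix argument) can extend the HTP conclusion to a larger region is a natural follow-up question not addressed by this proof.
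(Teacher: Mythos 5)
Your proof is correct and follows essentially the same route as the paper, which simply asserts that the result "follows from the hyperdeterminantal summation formula \eqref{eq_hgfhyperdet_exp} given in Example~\ref{hgfhyperdet}"; you have supplied the term-by-term sign verification (positivity of the Vandermonde factors, nonnegativity of the Pochhammer symbols and Schur functions) that the paper leaves implicit. Your closing observation that the slice-multiplication trick from Proposition~\ref{prop_exponentialexample}(i) has no analogue for general ${}_pF_q$ kernels is a sensible aside but not needed for the statement as given.
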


The proof of Proposition~\ref{prop_hgfexample} follows from the hyperdeterminantal summation formula \eqref{eq_hgfhyperdet_exp} given in Example \ref{hgfhyperdet}.

\begin{Example}
In Proposition~\ref{prop_hgfexample}, suppose that $(p,q)=(1,0)$ and $a_1 \equiv a \ge 0$. It is well known that ${}_1F_0(a;x) = (1-x)^{-a}$, $|x| < 1$, and therefore $K(x_1,\dots,x_{2m}) = (1 - x_1 \cdots x_{2m})^{-a}$, $|x_1\cdots x_{2m}| < 1$. Hence the kernel $K$ is HTP$_\infty$ on the region $\{(x_1,\dots,x_{2m})\mid x_1\cdots x_{2m} < 1,\, x_1 \ge 0,\dots,x_{2m} \ge 0\}$.
\end{Example}

\subsection{The basic composition formula}

In the classical setting, examples of kernels that are hyperdeterminantal totally positive often are constructed using the {\it basic composition formula} \cite{Karlin}: If $K_1, K_2 \colon \R \to \R$ are TP$_d$ kernels, then so is the kernel
\[
K(x_{1,1},x_{2,1}) = \int_\R K_1(x_{1,1},t) K_2(t,x_{2,1}) \dd\mu(t),
\]
where $\mu$ is a positive Borel measure such that the integral converges absolutely. In the hyperdeterminantal setting, the following generalization of the basic composition formula follows from the hyperdeterminantal Binet--Cauchy formula in Proposition~\ref{prop_binet_cauchy}.

\begin{Proposition}\label{prop_bcf}
Let $(X,\mu)$ be a totally ordered sigma-finite measure space, and let $\{\phi_{k,r}\mid 1 \le k \le 2m,\, 1 \le r \le d\}$ be a collection of real-valued functions on $X$. Suppose for each $k=1,\dots,2m$ that the kernel $K_k(t,r) = \phi_{k,r}(t)$, $(t,r) \in X \times \{1,\dots,d\}$, is TP$_d$. Define the multidimensional array
\[
A(r_1,\dots,r_{2m}) = \int_X \prod_{k=1}^{2m} \phi_{k,r_k}(x) \dd\mu(x),
\]
where $1 \le r_1,\dots,r_{2m} \le d$, and the integral is assumed to converge absolutely. Then the array~${(A(r_1,\dots,r_{2m}))_{1 \le r_1,\dots,r_{2m} \le d}}$ is HTP$_d$.
\end{Proposition}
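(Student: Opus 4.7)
The plan is to reduce the hyperdeterminant of any $n$-dimensional sub-array of $A$ (with $n \le d$) to an iterated integral of a product of classical $n \times n$ determinants via Matsumoto's Binet--Cauchy formula (Proposition~\ref{prop_binet_cauchy}), and then to invoke the TP$_d$ hypothesis on each $K_k$ to conclude nonnegativity of the integrand term-by-term.

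First I would fix $n \in \{1,\dots,d\}$ and, for each $k=1,\dots,2m$, select a decreasing tuple $j_{k,1} > j_{k,2} > \cdots > j_{k,n}$ in $\{1,\dots,d\}$. Setting $\psi_{k,s}(x) := \phi_{k,j_{k,s}}(x)$, the corresponding sub-array of $A$ has entries
\[
B(s_1,\dots,s_{2m}) := A(j_{1,s_1},\dots,j_{2m,s_{2m}}) = \int_X \prod_{k=1}^{2m} \psi_{k,s_k}(x)\dd\mu(x),
\]
with $1 \le s_1,\dots,s_{2m} \le n$, and the absolute convergence hypothesis on $A$ transfers directly to the $\psi_{k,s}$. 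Applying \eqref{binetcauchy2} of Proposition~\ref{prop_binet_cauchy} to the family $\{\psi_{k,s}\}$ on the totally ordered space $(X,\mu)$ yields
\[
\Det(B) = \idotsint\limits_{\substack{x_1 > \cdots > x_n \\ x_1,\dots,x_n \in X}} \prod_{k=1}^{2m} \det(\psi_{k,s}(x_t))_{1 \le s,t \le n} \prod_{j=1}^n \dd\mu(x_j).
\]

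Each factor $\det(\psi_{k,s}(x_t))_{1 \le s,t \le n} = \det(\phi_{k,j_{k,s}}(x_t))$ is precisely an $n \times n$ minor of the kernel $K_k(t,r) = \phi_{k,r}(t)$ whose column indices $j_{k,1} > \cdots > j_{k,n}$ and row arguments $x_1 > \cdots > x_n$ are both strictly decreasing in the respective totally ordered sets. The TP$_d$ hypothesis on $K_k$ then forces this determinant to be nonnegative. Hence the full integrand is a product of $2m$ nonnegative quantities, so integrating against the positive measure $\prod_j \dd\mu(x_j)$ gives $\Det(B) \ge 0$. Since $n$ and the index selections were arbitrary, every sub-hyperdeterminant of $A$ of size at most $d$ is nonnegative, establishing HTP$_d$.

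The main point requiring care is aligning orientation conventions: the TP$_d$ hypothesis and the integration domain in \eqref{binetcauchy2} must use the same orientation on the ordered index sets. With the decreasing tuples above, everything matches directly. Had one instead chosen increasing index tuples in some of the $2m$ directions, each such determinant would acquire a sign $(-1)^{\binom{n}{2}}$; but the evenness of $2m$ forces the overall sign on the product to remain $+1$, so the conclusion is unaffected. Beyond this bookkeeping no deeper obstacle appears, and the result emerges as a clean combination of Proposition~\ref{prop_binet_cauchy} with the classical TP$_d$ hypothesis on each coordinate kernel.
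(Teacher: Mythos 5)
Your proof is correct and follows essentially the same route as the paper: apply the ordered Binet--Cauchy formula \eqref{binetcauchy2} to represent each sub-hyperdeterminant as an integral over the fundamental Weyl chamber of a product of $2m$ classical minors, each of which is nonnegative by the TP$_d$ hypothesis on the corresponding $K_k$. You are in fact slightly more explicit than the paper, which treats only the leading minors $\Det(A(r_1,\dots,r_{2m}))_{1\le r_1,\dots,r_{2m}\le n}$ and leaves the general sub-array selection and the orientation bookkeeping implicit.
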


\begin{proof}
For any $n$ such that $1 \le n \le d$, consider the hyperdeterminantal minor
\[
\Det(A(r_1,\dots,r_{2m}))_{1 \le r_1,\dots,r_{2m} \le n}.
\]
By applying \eqref{binetcauchy2} in Proposition~\ref{prop_binet_cauchy}, we can represent this minor as an integral whose integrand is nonnegative, hence the minor is nonnegative. Since $n$ is arbitrarily chosen, then it follows that the array $(A(r_1,\dots,r_{2m}))_{1 \le r_1,\dots,r_{2m} \le d}$ is HTP$_d$.
\end{proof}

As a consequence of this result, numerous HTP kernels can be constructed. For example, by choosing $\phi_{k,r}(t) = t^{k+\lambda_r}$, where $\lambda_1,\dots,\lambda_r$ are the parts of a~partition~$\lambda$, then the resulting minor $\Det(A(r_1,\dots,r_{2m}))_{1 \le r_1,\dots,r_{2m} \le n}$ is a hyperdeterminantal generalization of the Vandermonde determinant, and we obtain positivity results similar to those given in \cite{wanglixu}. If we choose $\phi_{k,r}(t)$ as a classical generalized hypergeometric series and if the measure $\mu$ is chosen suitably, then the entries $A(r_1,\dots,r_{2m})$, $1 \le r_1,\dots,r_{2m} \le n$, of the resulting array can be made to contain a~wide range of the classical special functions, such as the Bessel or the Gaussian hypergeometric series. By choosing $\phi_{k,r}(t)$ in terms of the bivariate normal distributions with negative correlations~\cite{karlinrinott80}, the entries of the resulting array can be made to include numerous multivariate probability distributions.

\section{Concluding remarks}
\label{sec_conclusions}

The results presented in this article raise many open problems in a variety of areas. For the purposes of making applications in statistics and probability, as in \cite{karlinrinott80}, we are especially interested in hyperdeterminantal generalizations of the FKG inequality. Here, even the most basic questions are still open, e.g., the determination of conditions under which the familiar multivariate probability distributions, such as the multivariate normal distributions, are HTP of any order.

A second problem arises from the definition of the hyperdeterminant. Suppose that the symmetric group $\frS_n$ in the sums that define $\Det(K(x_{1,r_1},x_{2,r_2},\dots,x_{2m,r_{2m}}))$ in \eqref{htphyperdet} were to be replaced by an arbitrary finite reflection group. Following the approach in \cite{grossrichards95}, corresponding generalizations of the Binet--Cauchy formula can be obtained and examples of hyperdeterminantal totally positive kernels can be constructed. It would be a fundamental achievement to derive analogs of the FKG inequality in this setting.

\subsection*{Acknowledgements}
The results in this article were first presented in \cite{Johnson_Richards} at the {\it Second CREST-SBM International Conference}, ``Harmony of Gr\"obner Bases and the Modern Industrial Society'', held June 28 -- July 2, 2010, in Osaka, Japan (see \cite{Hibi}). We express our gratitude to the organizers of the meeting for the opportunity to have presented our results there.

We are also grateful to SIGMA's referees for their meticulous reading of the article and very helpful comments.

\pdfbookmark[1]{References}{ref}
\LastPageEnding

\end{document}